\DeclareSymbolFont{cyrletters}{OT2}{wncyr}{m}{n}
\DeclareMathSymbol{\Sha}{\mathalpha}{cyrletters}{"58}
\definecolor{grn}{rgb}{0,0.6,0}
\definecolor{mrn}{rgb}{0.3,0,0}
\definecolor{blue}{rgb}{0,0,0.7}
\definecolor{Mygray}{rgb}{0.75,0.75,0.75}
\definecolor{auburn}{rgb}{0.43, 0.21, 0.1}
\definecolor{britishracinggreen}{rgb}{0.0, 0.26, 0.15}
\definecolor{taupe}{rgb}{0.28, 0.24, 0.2}
\newtheorem{theorem}{Theorem}[section]
\newtheorem{proposition}{Proposition}[section]
\newtheorem{cor}{Corollary}[section]
\newtheorem{lemma}{Lemma}[section]
\newtheorem{remark}{Remark}[section]
\newtheorem*{ack}{Acknowledgements}
\tikzstyle{startstop} = [rectangle, rounded corners, minimum width=3cm, minimum height=1cm,text centered, draw=black, fill=white!30]
\tikzstyle{io} = [trapezium, trapezium left angle=70, trapezium right angle=110, minimum width=3cm, minimum height=1cm, text centered, draw=black, fill=white!30]
\tikzstyle{process} = [rectangle, minimum width=2cm, minimum height=1cm, text centered, draw=black, fill=white!30]
\tikzstyle{decision} = [rectangle, minimum width=1cm, minimum height=1cm, text centered, draw=black, fill=white!30]
\tikzstyle{arrow} = [thick,->,>=stealth]
\begin{document}
\baselineskip=14.5pt
\title[On rank and $2$-Selmer group of a family of elliptic curves]{On the Mordell-Weil rank and $2$-Selmer group of a family of elliptic curves}

\author{Pankaj Patel, Debopam Chakraborty and Jaitra Chattopadhyay}
\address[Pankaj Patel and Debopam Chakraborty]{Department of Mathematics, BITS-Pilani, Hyderabad campus, Hyderabad, INDIA}
\address[Jaitra Chattopadhyay]{Department of Mathematics, Siksha Bhavana, Visva-Bharati, Santiniketan - 731235, West Bengal, India}

\email[Pankaj Patel]{p20200452@hyderabad.bits-pilani.ac.in}

\email[Debopam Chakraborty]{debopam@hyderabad.bits-pilani.ac.in}

\email[Jaitra Chattopadhyay]{jaitra.chattopadhyay@visva-bharati.ac.in; chat.jaitra@gmail.com}

\begin{abstract}
We consider the parametric family of elliptic curves over $\mathbb{Q}$ of the form $E_{m} : y^{2} = x(x - n_{1})(x - n_{2}) + t^{2}$, where $n_{1}$, $n_{2}$ and $t$ are particular polynomial expressions in an integral variable $m$. In this paper, we investigate the torsion group $E_{m}(\mathbb{Q})_{\rm{tors}}$, a lower bound for the Mordell-Weil rank $r({E_{m}})$ and the $2$-Selmer group ${\rm{Sel}}_{2}(E_{m})$ under certain conditions on $m$. This extends the previous works done in this direction, which are mostly concerned only with the Mordell-Weil ranks of various parametric families of elliptic curves.
\end{abstract}

\renewcommand{\thefootnote}{}

\footnote{2020 \emph{Mathematics Subject Classification}: Primary 11G05, Secondary 11G30.}

\footnote{\emph{Key words and phrases}: Elliptic curves, Mordell-Weil group, Selmer group.}

\footnote{\emph{We confirm that all the data are included in the article.}}

\renewcommand{\thefootnote}{\arabic{footnote}}
\setcounter{footnote}{0}

\maketitle

\section{introduction}

Number theory is primarily concerned with Diophantine equations and their integral or rational solutions. It is difficult, in general, to determine whether a given Diophantine equation has any solution at all or not. A famous example is that of Fermat's Last Theorem which remained unresolved for more than three hundred years before Wiles (\cite{tw} and \cite{w}) proved the existence of no integral solutions using very sophisticated techniques of modern mathematics. In 1900 Hilbert asked the question, famously known as the ``Hilbert's tenth problem", whether there exists an algorithm that can decide within finitely many steps if a given Diophantine equation has solutions in $\mathbb{Z}$. In 1970, Matiyasevich \cite{mat} answered this negatively. Along a similar line, Mordell observed that the arithmetic behaviour of the points on a curve is quite closely related to the {\it genus} of it and conjectured that a curve over $\mathbb{Q}$ of genus at least $2$ can have at most finitely many rational points. This conjecture of Mordell was confirmed assertively in 1983 by Faltings.

\smallskip

Among the class of Diophantine equations, elliptic curves occupies a central position and the study of their rational points has been an important theme among number theorists. An elliptic curve $E$ over $\mathbb{Q}$ is an equation of the form $y^{2} = f(x)$, where $f(X) \in \mathbb{Q}[X]$ is a cubic polynomial having distinct roots in $\mathbb{C}$, together with a rational point $\mathcal{O}$. The set of rational points on $E$ is denoted by $E(\mathbb{Q})$ and one can define a binary operation, called addition of points on $E(\mathbb{Q})$, that makes $E(\mathbb{Q})$ into an abelian group. It is a fundamental result in the theory of the arithmetic of elliptic curves that $E(\mathbb{Q})$ is a finitely generated abelian group and thus by the structure theorem of finitely generated abelian groups, we have $$E(\mathbb{Q}) \simeq E(\mathbb{Q})_{\rm{tors}} \oplus \mathbb{Z}^{r},$$ for some integer $r \geq 0$. Here $E(\mathbb{Q})_{\rm{tors}}$ is called the torsion group of $E$ and the integer $r$, also referred to as $r(E)$, is called the Mordell-Weil rank of $E$. 

\smallskip

The computation of the Mordell-Weil ranks of elliptic curves is an important area of research in number theory due to its influence in several problems seemingly unrelated to elliptic curves. A celebrated example of such nature is the \text{congruent number elliptic curve} $E: y^{2} = x^{3} - n^{2}x$ for positive integers $n$. The rank $r(E)$ determines whether $n$ can be realized as the area of a rational-sided right-angled triangle or, equivalently, the existence of a triplet in arithmetic progression with common difference $n$, where all three terms are perfect squares (cf. \cite{con}). In \cite[Theorem 1]{bro}, Brown and Myers studied the curve $E: y^{2} = x^{3}-x+m^{2}$ over $\mathbb{Q}$ and proved that the curve has a trivial torsion group, and the Mordell-Weil rank is at least $2$. The precise statement is as follows. 
\begin{theorem}[cf. \cite{bro}, Theorem 1]
    Let $m \geq 0$ be an integer, and let $E_{m}$ be the elliptic curve with equation $y^{2} = x^{3} - x + m^{2}$. Then the following hold.
    \begin{enumerate}
    \item If $m \geq 1$, then $E_{m}(\mathbb{Q})_{\rm{tors}} = \{\mathcal{O}\}$. 
    
    \item If $m \geq 2$, the $r(E_{m}(\mathbb{Q})) \geq 2$, with $P = (0,m)$ and $Q = (-1,m)$ being two independent points. 
    
    \item There are infinitely many values of $m$ for which $r(E_{m}(\mathbb{Q})) \geq 3$. 
    \end{enumerate}
\end{theorem}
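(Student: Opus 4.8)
The plan is to take the three parts in order, using the triviality of torsion (part (1)) to upgrade the explicit points in part (2) to points of infinite order, and then to manufacture a third independent point along a sub-family for part (3). For part (1) I would exploit that $E_m$ has good reduction at $3$ and $5$ for \emph{every} $m$: the discriminant is, up to sign and a power of $2$, equal to $27m^4-4$, and a direct congruence check shows $27m^4-4$ is coprime to $15$ for all $m$. Thus the reductions give injections $E_m(\mathbb{Q})_{\mathrm{tors}}\hookrightarrow E_m(\mathbb{F}_3)$ and $E_m(\mathbb{Q})_{\mathrm{tors}}\hookrightarrow E_m(\mathbb{F}_5)$. Counting points modulo $3$ yields $\#E_m(\mathbb{F}_3)=7$ when $m\not\equiv 0 \pmod 3$ and $\#E_m(\mathbb{F}_3)=4$ when $m\equiv 0$, while a short computation gives $\#E_m(\mathbb{F}_5)=8$ for every residue of $m$. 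When $m\not\equiv 0\pmod 3$ the coprimality $\gcd(7,8)=1$ already forces the torsion to be trivial; when $m\equiv 0\pmod 3$ the torsion order divides $\gcd(4,8)=4$, so it suffices to rule out $2$-torsion. But a rational $2$-torsion point is an integer root of $x^3-x+m^2$, i.e.\ a solution of $m^2=x-x^3$, which is impossible for $m\ge 1$ since the curve $y^2=x^3-x$ has rank $0$ with integral points only $(0,0),(\pm 1,0)$.

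For part (2), both $P=(0,m)$ and $Q=(-1,m)$ lie on $E_m$, and by part (1) neither is torsion, so each has infinite order for $m\ge 1$; the content is independence for $m\ge 2$. I would first record two auxiliary points explicitly. Intersecting $E_m$ with the horizontal line $y=m$ gives $x^3-x=0$, so $(0,m),(-1,m),(1,m)$ are collinear and $P+Q=(1,-m)$; intersecting with the line through $P$ and $-Q$ (slope $2m$) gives the third intersection $x=4m^2+1$, whence $P-Q=\bigl(4m^2+1,\,-(8m^3+3m)\bigr)$. Independence of $P,Q$ is then equivalent to nonvanishing of the regulator $R=\hat h(P)\hat h(Q)-\langle P,Q\rangle^2$, where $\hat h$ is the canonical height and $\langle P,Q\rangle=\tfrac12(\hat h(P+Q)-\hat h(P)-\hat h(Q))$. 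Since $\hat h$ is positive definite and, by part (1), $P+Q$ and $P-Q$ are non-torsion (hence of strictly positive height), one has $R\ge 0$ with equality exactly in the dependent case, and the parallelogram law $\hat h(P+Q)+\hat h(P-Q)=2\hat h(P)+2\hat h(Q)$ ties these quantities together with the growing coordinate $x(P-Q)=4m^2+1$.

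I expect the nonvanishing of this regulator, \emph{uniformly in $m$}, to be the main obstacle. Crude naive-height estimates are not sufficient here: the discrepancy between the naive and canonical heights is controlled by $h(\Delta)\sim 4\log m$, which is of the same order $\log m$ as the heights themselves, so one cannot simply read off $R>0$ from the sizes of the $x$-coordinates. The honest route is to compute the canonical heights via the local decomposition $\hat h=\sum_v\hat\lambda_v$, writing the archimedean term and the finitely many non-archimedean terms (at the primes dividing $2(27m^4-4)$) as explicit functions of $m$, so that $R$ becomes an explicit expression shown to be nonzero for all $m\ge 2$; equivalently one may compute the height pairing on the generic fibre of the elliptic surface over $\mathbb{Q}(m)$ and check the Gram matrix is nonsingular. (This is also where the paper's $2$-descent machinery could be substituted: independence would follow from the images of $P,Q$ being $\mathbb{F}_2$-linearly independent in $E_m(\mathbb{Q})/2E_m(\mathbb{Q})$.)

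For part (3) I would produce a one-parameter sub-family carrying a third point by the ansatz $y=m+2$: requiring $(x_0,m+2)\in E_m$ forces $x_0^3-x_0=4m+4$, i.e.\ $m=(x_0^3-x_0-4)/4$, which is a positive integer for every odd $x_0\ge 3$ (since then $8\mid x_0^3-x_0$). This gives infinitely many $m$ together with an explicit extra point $R=(x_0,m+2)$; for instance $x_0=3$ yields $m=5$ and $R=(3,7)$. Setting $t=x_0$ and viewing $E_{m(t)}$ as a family over $\mathbb{Q}(t)$ with sections $P,Q,R$, a nontrivial dependence among them over $\mathbb{Q}(t)$ would specialize to the same relation at every $t$; so verifying independence of $P,Q,R$ at one value (e.g.\ a $3\times 3$ regulator computation at $m=5$) proves the sections are independent over $\mathbb{Q}(t)$, and Silverman's specialization theorem then yields their independence in $E_{m(t)}(\mathbb{Q})$ for all but finitely many $t$. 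Restricting $t$ to odd integers $\ge 3$ leaves infinitely many admissible $m$, giving $r(E_m(\mathbb{Q}))\ge 3$ for infinitely many $m$. The residual difficulty is again the single regulator certification at the test value, after which specialization does the rest.
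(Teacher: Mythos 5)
This statement is quoted from Brown and Myers \cite{bro} and the paper offers no proof of it, so there is no internal argument to compare against; the closest relative is Section \ref{lower-sec}, where the authors prove rank $\geq 2$ for \emph{their} family by exhibiting a nonsingular $2\times 2$ height-pairing matrix. Judged on its own terms, your part (1) is essentially complete and correct: the discriminant $-16(27m^4-4)$ is indeed prime to $15$ for all $m$, your point counts $\#E_m(\mathbb{F}_3)\in\{7,4\}$ and $\#E_m(\mathbb{F}_5)=8$ check out, and the reduction to excluding $2$-torsion (any nontrivial group of order dividing $4$ contains an involution) followed by the observation that $m^2=x-x^3$ forces an integral point on $y^2=x^3-x$ is sound, granting the classical determination of the rational points on that curve. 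The computations $P+Q=(1,-m)$ and $P-Q=\bigl(4m^2+1,-(8m^3+3m)\bigr)$ are also correct.

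The genuine gap is that in parts (2) and (3) the decisive step is named but not performed, and you say so yourself: ``I expect the nonvanishing of this regulator, uniformly in $m$, to be the main obstacle'' and ``the residual difficulty is again the single regulator certification at the test value.'' Nonvanishing of the regulator of $P,Q$ \emph{is} the assertion of part (2); reformulating independence as $R>0$ and listing routes (local height decomposition, generic fibre over $\mathbb{Q}(m)$, or $2$-descent) is a plan, not a proof, and as you correctly note the naive-versus-canonical height discrepancy is of the same order $\log m$ as the heights, so no soft estimate closes it. A workable and much lighter route, which is in the spirit of what Brown and Myers actually do, is to use part (1): if $P$ and $Q$ generated a rank-one subgroup of the torsion-free group $E_m(\mathbb{Q})$, then at least one of $P$, $Q$, $P+Q$ would lie in $2E_m(\mathbb{Q})$, and one rules each of these out with the duplication formula and elementary congruences/sign considerations in $m$ --- no height computation needed. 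Part (3) has the same unfinished kernel: the specialization argument from $\mathbb{Q}(t)$ is the right framework, but it is only as good as the one certified rank-$3$ instance (e.g.\ at $m=5$), which you do not supply.
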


Later on, through the works of various authors (cf. \cite{ant}, \cite{cha}, \cite{eki}, \cite{nara}, \cite{juy}, \cite{rout}), the Mordell-Weil group of certain variants of the aforementioned elliptic curve were explored. In a series of two papers (cf. \cite{tad1} and \cite{tad2}), Tadic studied similar elliptic curves over function fields. In this article, we delved into a somewhat more general elliptic curve $E: y^{2} = x (x-n_{1})(x-n_{2}) + t^{2}$ for certain integers $n_{1}, n_{2}$, and $t$, and look into both the Mordell-Weil rank and the $2$-Selmer rank of the same curve. We first specify certain choices for $n_{1}, n_{2}$ and $t$ in the following remark.  
\begin{remark}\label{mainrmk}
    We choose an even integer $m$ such that $m \pm 1$ are twin primes, and $m^{2}+1$ is square-free. For every such integer $m$, we denote $n_{1} = (m^{2}+1)^{2}, n_{2} = -(m^{2}-1)^{2}$, and $t = 2m(m^{4}-1)$. This gives a specific representation for the elliptic curve $y^{2} = x(x - n_{1})(x - n_{2}) + t^{2}$, which is suitable for both the Mordell-Weil rank computation as well as the $2$-Selmer rank computation. The representation is as follows.  
    $$E_{m}: y^{2} = x(x-(m^{2}+1)^{2})(x + (m^{2}-1)^{2})+ (2m(m^{4}-1))^{2} = (x - (m^{4}-1))(x + (m^{4}-1))(x - 4m^{2}).$$ The discriminant of $E_{m}$ is a divisor of $2^{6} \cdot (m^{4}-1)^{2} \cdot (m^{4}-1 - 4m^{2})^{2} \cdot (m^{4}-1 +4m^{2})^{2}$. 
\end{remark}


\begin{table}
    \centering
    \begin{tabular}{|c|c|c|c|c|c|}
    \hline
        $m$ & $m^{4} -1$ & $m^{4}-1-4m^{2}$  & $m^{4}-1+4m^{2}$ & $r(E_{m})$ & $s_{2}(E_{m})$ \\
        \hline
        $6$ & $5 \cdot 7 \cdot 37$ & $1439$ & $1151$ & $2$ & $4$ \\
        \hline
        $12$ & $13 \cdot 11 \cdot 5 \cdot 29$ & $19 \cdot 1061$ & $101 \cdot 211$ & $3$ & $3$ \\
        \hline
        $30$ & $17 \cdot 29 \cdot 31 \cdot 53$ & $11 \cdot 73309$ & $19 \cdot 42821$ & $3$ & $3$ \\
        \hline
        $42$ & $41 \cdot 43 \cdot 1765$ & $59 \cdot 101 \cdot 521$ & $229 \cdot 13619$ & $\geq 2$ & $4$ \\
        \hline
        $60$ & $13 \cdot 59 \cdot 61 \cdot 277$ & $229 \cdot 56531$  & $31 \cdot 139 \cdot 3011$ & $4$ & $4$ \\
        \hline
        $462$ & $5 \cdot 461 \cdot 463 \cdot 42689$ & $45557487359$ & $45559194911$ & $\geq 3$ & $\geq 5$ \\
        \hline
    \end{tabular}
    \caption{Explicit values/lower bounds of $s_{2}(E_{m})$ and $r(E_{m})$}
    \label{tab:my_label}
\end{table}

In what follows, we adhere to the following notations throughout the paper. 
\subsection*{Notation} 
\begin{itemize}
\item For any $\alpha \in \mathbb{Q}^{*}$, its canonical image in $\mathbb{Q}^{*}/(\mathbb{Q}^{*})^{2}$ is denoted by $[\alpha]$. Also, for $[\alpha_{1}],\ldots,[\alpha_{t}] \in \mathbb{Q}^{*}/(\mathbb{Q})^{2}$, the subgroup of $\mathbb{Q}^{*}/(\mathbb{Q}^{*})^{2}$ generated by these elements is denoted by $\langle [\alpha_{1}],\ldots, [\alpha_{t}] \rangle$. 
 \item Here $m$ denotes an even positive integer such that both $m + 1$ and $m - 1$ are prime numbers and $m^{2}+1$ is square-free.
    \item By $p_{i}$ (resp. $q_{i}$ and $r_{i}$), we denote all the prime factors of $m^{4}-1$ (resp. all prime factors of $m^{4}-1-4m^{2}$ and $m^{4} - 1 + 4m^{2}$).
    \item An arbitrary place is denoted by $\ell \leq \infty$. Similarly, $z \in \mathbb{Q}_{\ell}$ is written as $z = u \cdot \ell^{t}$, where $u \in \mathbb{Z}_{\ell}^{*}$. In that case, the $\ell$-adic valuation of $z$ is $v_{\ell}(z) = t$. 
    \item For a prime number $\ell$, the symbol $\left(\frac{a}{\ell}\right)$ stands for the Legendre symbol of $a \pmod \ell$.
    \item By $r(E)$ and $s_{2}(E)$, we denote respectively the Mordell-Weil rank and the $2$-Selmer rank of an elliptic curve $E$.
    \item For an elliptic curve $E$ and a point $(x,y) \in E(\mathbb{Q})$, we denote its canonical image in $E(\mathbb{Q})/2E(\mathbb{Q})$ by $(\bar{x},\bar{y})$. 
\end{itemize}
 
 The main result of this paper is as follows. 

\begin{theorem}\label{mainthm}
    Let $E_{m}: y^{2} = x (x - (m^{2}+1)^{2})(x + (m^{2}-1)^{2}) + (2m(m^{4}-1))^{2}$ where $m$ is an even integer such that $m \pm 1$ are primes, and $m^{2}+1$ is square-free. Then $E_{m}(\mathbb{Q})_{\rm{tors}} \simeq \mathbb{Z}/ 2 \mathbb{Z} \times \mathbb{Z}/ 2 \mathbb{Z}$ and $r(E_{m}) \geq 2$.\\
    Moreover, $s_{2}(E_{m}) \geq w$, where $w$ denotes the number of prime factors $p$ of $m^{4}-1$ satisfying $\left(\frac{p}{q_{i}}\right) = \left(\frac{p}{r_{j}}\right) = 1$ if $p \equiv 1 \pmod 4$, and $\left(\frac{p}{q_{i}}\right) = \left(\frac{-p}{r_{i}}\right) = 1$ if $p \equiv 3 \pmod 4$, where $q_{i}$ (resp. $r_{j}$) varies over all the prime factors of $m^{4}-1-4m^{2}$ (resp. over all the prime factors of $m^{4}-1+4m^{2}$).   
\end{theorem}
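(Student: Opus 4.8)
The plan is to establish the three assertions in turn, writing the curve throughout in its factored form $y^{2} = (x-e_{1})(x-e_{2})(x-e_{3})$ with $e_{1} = m^{4}-1$, $e_{2} = -(m^{4}-1)$, $e_{3} = 4m^{2}$, and setting $A = m^{4}-1-4m^{2} = e_{1}-e_{3} = \prod q_{i}$ and $B = m^{4}-1+4m^{2} = e_{3}-e_{2} = \prod r_{j}$. Since the three roots $e_{1},e_{2},e_{3}$ are distinct rational integers for the admissible $m$, the full $2$-torsion is rational and $(\mathbb{Z}/2\mathbb{Z})^{2} \subseteq E_{m}(\mathbb{Q})_{\mathrm{tors}}$; by Mazur's theorem it then suffices to exclude a rational point of order $4$ and a rational point of order $3$. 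For order $4$ I would use the classical criterion that $(e_{i},0) \in 2E_{m}(\mathbb{Q})$ iff both $e_{i}-e_{j}$ and $e_{i}-e_{k}$ are squares in $\mathbb{Q}^{*}$: here $e_{1}-e_{2} = 2(m^{4}-1)$ has odd $2$-adic valuation (as $m^{4}-1$ is odd), while $e_{2}-e_{1}$ and $e_{3}-e_{1} = -A$ are negative, so no $2$-torsion point is divisible by $2$. For order $3$ I would reduce modulo an auxiliary prime $\ell$ of good reduction and show $3 \nmid \#E_{m}(\mathbb{F}_{\ell})$ (equivalently that the $3$-division polynomial has no rational root), which together with the above pins the torsion down to $(\mathbb{Z}/2\mathbb{Z})^{2}$.

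For the rank bound I would exhibit the two rational points $P_{0} = (0,\,2m(m^{4}-1))$ and $P_{1} = ((m^{2}+1)^{2},\,2m(m^{4}-1))$ arising from the vanishing of the cubic $x(x-n_{1})(x-n_{2})$; both have $y \neq 0$, hence are non-torsion by the previous paragraph. Applying the complete $2$-descent map $\mu(x,y) = (x-e_{1},\,x-e_{2},\,x-e_{3})$ one computes $\mu(P_{0}) = [-(m^{4}-1),\,m^{4}-1,\,-1]$ and $\mu(P_{1}) = [2(m^{2}+1),\,2(m^{2}+1),\,1]$, alongside the $2$-torsion images $\mu(e_{1},0) = [2aA,\,2a,\,A]$ and $\mu(e_{2},0) = [-2a,\,2aB,\,-B]$. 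It then remains to verify that these four classes are $\mathbb{F}_{2}$-linearly independent in $(\mathbb{Q}^{*}/(\mathbb{Q}^{*})^{2})^{3}$: the third coordinate separates the two torsion classes through the primes $q_{i}$ and $r_{j}$, while the first coordinate separates $\mu(P_{0})$ from $\mu(P_{1})$ through the primes dividing $m^{2}-1$. Independence forces $\dim_{\mathbb{F}_{2}} E_{m}(\mathbb{Q})/2E_{m}(\mathbb{Q}) \geq 4 = r(E_{m})+2$, whence $r(E_{m}) \geq 2$.

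The heart of the argument is the Selmer bound, for which I would use the standard description of $\mathrm{Sel}_{2}(E_{m})$ via everywhere-local solvability of the homogeneous spaces attached to pairs $(b_{1},b_{2})$ supported on $S = \{2,\infty\} \cup \{p_{i}\} \cup \{q_{i}\} \cup \{r_{j}\}$. For each prime $p \mid m^{4}-1$ meeting the stated conditions I propose the class $\sigma_{p}$ represented, up to a sign forced by real and $2$-adic solvability, by $(b_{1},b_{2}) = (p,p)$, and verify that the torsor
$$b_{1}z_{1}^{2} - b_{2}z_{2}^{2} = e_{2}-e_{1}, \qquad b_{1}z_{1}^{2} - b_{1}b_{2}z_{3}^{2} = e_{3}-e_{1}$$
has points over every $\mathbb{Q}_{\ell}$. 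At $\ell \notin S$ solvability is automatic; at $\ell = \infty$ it follows because the sign pattern $(+,+,+)$ lies in the real locus $x \geq e_{1}$; at $\ell = 2$ it follows from the congruences forced by $m$ even and $m^{2}+1$ square-free (so $m^{4}-1 \equiv A \equiv B \equiv 7 \pmod 8$); at the primes $p$ and $p_{i} \neq p$ of $m^{4}-1$ it is automatic since the two affected coordinates carry the same class; and at each $q_{i} \mid A$ and $r_{j} \mid B$, where $E_{m}$ has multiplicative reduction, Hensel's lemma reduces solvability to a single Legendre-symbol condition.

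The main obstacle is the bookkeeping at these last primes. The condition at $q_{i}$ comes out as $\left(\frac{p}{q_{i}}\right) = 1$, whereas at $r_{j}$ the negativity of $e_{2}-e_{3}$ inserts an extra factor $\left(\frac{-1}{r_{j}}\right)$; rewriting all of these in the normalized form $\left(\frac{p}{q_{i}}\right)$ and $\left(\frac{p}{r_{j}}\right)$ via quadratic reciprocity introduces the sign $(-1)^{\frac{p-1}{2}\frac{q_{i}-1}{2}}$, and it is precisely this interplay of the $2$-adic normalization, the sign of $B$, and reciprocity that produces the dichotomy between $p \equiv 1$ and $p \equiv 3 \pmod 4$, the latter turning the $r_{j}$-condition into $\left(\frac{-p}{r_{j}}\right) = 1$. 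Getting these signs to close up consistently — so that the candidate $\sigma_{p}$ is genuinely everywhere locally solvable exactly under the stated hypotheses — is the delicate step and will require care at $\ell = 2$ in particular. Finally, since the classes $\sigma_{p}$ are supported at distinct primes $p \mid m^{4}-1$, they are $\mathbb{F}_{2}$-independent and independent of the classes coming from $E_{m}(\mathbb{Q})[2]$; hence the $w$ qualifying primes contribute $w$ independent elements to $\mathrm{Sel}_{2}(E_{m})$, giving $s_{2}(E_{m}) \geq w$.
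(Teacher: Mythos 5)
Your torsion argument (full rational $2$-torsion plus exclusion of points of order $3$ and $4$ via Mazur) is sound, though you leave the order-$3$ exclusion unspecified; the paper does both exclusions at once by reducing modulo $3$, which is a prime of good reduction because $3 \mid m$ forces the reduced curve to be $y^{2} = x^{3} - x$ with only $4$ points over $\mathbb{F}_{3}$. Your rank argument is genuinely different from the paper's and, once the easy independence check is written out, correct: the paper computes canonical heights of $P_{1} = (0,t)$ and $P_{2} = (n_{1},t)$ and shows the $2\times 2$ height-pairing matrix $\begin{pmatrix} 2 & -1 \\ -1 & 3\end{pmatrix}$ is nonsingular, whereas you push the four points through the complete $2$-descent map and separate their images using the primes $q_{i} \mid A$, $r_{j}\mid B$, $m\pm 1$, the sign, and the $2$-adic valuation (note $v_{2}(2(m^{2}+1)) = 1$ is what makes $\mu(P_{1})$ itself nontrivial). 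Your route has the advantage of being purely algebraic and of reusing the same descent machinery as the Selmer computation; the paper's height computation buys explicit regulator information but, as written, argues by degrees in $m$.

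The gap is in the Selmer bound, which is the heart of the theorem. Your framework (exhibiting $(b_{1},b_{2}) = (\pm p, p)$ as everywhere locally solvable classes, one per qualifying prime $p \mid m^{4}-1$) is exactly the paper's, but you defer precisely the computations that constitute the proof: the paper's Lemma \ref{lems1} (controlling the possible valuations of $z_{1},z_{2},z_{3}$), Lemma \ref{lems3} (extracting the Legendre-symbol conditions at each $p_{i}$, $q_{i}$, $r_{j}$ and at $2$), and the explicit Hensel lifts at $\ell = 2, 3, p$ in Section 5. Moreover, one of your claims is concretely false: local solvability at $\ell = p$ is \emph{not} automatic "since the two affected coordinates carry the same class." For $(b_{1},b_{2}) = (p,p)$ one has $b_{1}b_{2} \equiv 0 \pmod {p^{2}}$, every local point is forced to have $v_{p}(z_{3}) = -1$, and equation \eqref{eq2} modulo $p$ reduces to $\frac{b_{1}b_{2}}{p^{2}}u_{3}^{2} \equiv -4m^{2} \pmod p$, i.e. to $\left(\frac{-1}{p}\right) = 1$. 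This is exactly where the dichotomy between $p \equiv 1$ and $p \equiv 3 \pmod 4$ originates (together with the constraint $b_{1} \equiv 1 \pmod 4$ at $\ell = 2$, which is why the class becomes $(-p,p)$ when $p \equiv 3 \pmod 4$, making $-b_{1}b_{2}/p^{2} = 1$ a square); it does not come from reciprocity at the $q_{i}$ and $r_{j}$ as you suggest. Until the local analysis at $\ell = 2$, $\ell = p$, and the $q_{i}$, $r_{j}$ is actually carried out, the inequality $s_{2}(E_{m}) \geq w$ is not proved.
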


\begin{remark}
    It is a folklore conjecture that almost all the elliptic curves over $\mathbb{Q}$ have Mordell-Weil rank either $0$ or $1$ and therefore, those with bigger rank are rare to find. Our result is significant in the sense that it provides a family of elliptic curves of rank at least $2$ . 
\end{remark}

\begin{remark}
    We note that, if $\ell$ is a prime factor of $m^{2} + 1$, then $-1 \equiv m^{2} \pmod {\ell}$ implies that $\left(\frac{-1}{\ell}\right) = \left(\frac{m^{2}}{\ell}\right) = 1$. Therefore, we have $\ell \equiv 1 \pmod {4}$. Again, since both $m - 1$and $m + 1$ are prime numbers, we conclude that exactly one of them is of the form $4k + 3$. Hence only one of all the prime factors of $m^{4}-1$ is of the form $4k+3$.  
\end{remark}

\begin{cor}\label{maincor}
Let $E_{m}: y^{2} = x (x - (m^{2}+1)^{2})(x + (m^{2}-1)^{2}) + (2m(m^{4}-1))^{2}$ where $m$ is an even integer such that $m \pm 1$ are primes, and $m^{2}+1$ is squarefree. Moreover, if both $(m^{4}-1-4m^{2})$ and $(m^{4}-1+4m^{2})$ are prime numbers, the $2$-Selmer rank $s_{2}(E_{m}) = w+1$, where $w$ denotes the number of prime factors of $m^{4}-1$.
\end{cor}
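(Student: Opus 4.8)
The plan is to promote the lower bound of Theorem~\ref{mainthm} to an exact value by carrying out a complete $2$-descent on $E_m$, the point being that once $q:=m^{4}-1-4m^{2}$ and $r:=m^{4}-1+4m^{2}$ are prime, every local computation becomes explicit. I would work with the factored model $E_{m}:y^{2}=(x-e_{1})(x-e_{2})(x-e_{3})$ of Remark~\ref{mainrmk}, where $e_{1}=m^{4}-1$, $e_{2}=-(m^{4}-1)$ and $e_{3}=4m^{2}$, so that $e_{1}-e_{2}=2(m^{4}-1)$, $e_{1}-e_{3}=q$ and $e_{2}-e_{3}=-r$. The set of bad places is $S=\{\infty,2\}\cup\{p:p\mid m^{4}-1\}\cup\{q,r\}$. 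The descent homomorphism $\delta\colon E_{m}(\mathbb{Q})/2E_{m}(\mathbb{Q})\hookrightarrow \mathbb{Q}^{*}/(\mathbb{Q}^{*})^{2}\times\mathbb{Q}^{*}/(\mathbb{Q}^{*})^{2}$ sends $(x,y)$ to $([x-e_{1}],[x-e_{2}])$, the third coordinate $[x-e_{3}]$ being forced because the product of the three factors is a square; a class $([b_{1}],[b_{2}])$ lies in $\mathrm{Sel}_{2}(E_{m})$ precisely when the pair of conics $b_{1}z_{1}^{2}-b_{2}z_{2}^{2}=e_{2}-e_{1}$ and $b_{1}z_{1}^{2}-b_{1}b_{2}z_{3}^{2}=e_{3}-e_{1}$ has a $\mathbb{Q}_{v}$-point for every $v\in S$ (solvability off $S$ being automatic). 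Since $E_{m}$ has full rational $2$-torsion, $s_{2}(E_{m})=\dim_{\mathbb{F}_{2}}\mathrm{Sel}_{2}(E_{m})-2$, so the target is $\dim_{\mathbb{F}_{2}}\mathrm{Sel}_{2}(E_{m})=w+3$.

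For the local conditions I would first record, by the usual argument that the odd part of $b_{i}$ divides $\prod_{j\neq i}(e_{i}-e_{j})$, that a Selmer class has the form $b_{1}=s_{1}2^{a}q^{c}\prod_{i}p_{i}^{\epsilon_{i}}$ and $b_{2}=s_{2}2^{b}r^{d}\prod_{i}p_{i}^{\epsilon_{i}}$, with the same exponents $\epsilon_{i}\in\{0,1\}$ on the primes $p_{i}\mid m^{4}-1$ (forced because $b_{3}=b_{1}b_{2}$ may be divisible only by $q$ and $r$), and $a,b,c,d\in\{0,1\}$, $s_{1},s_{2}\in\{\pm1\}$; this describes a space of dimension $w+6$. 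I would then cut it down place by place. The real place forces $s_{2}=+1$. At each $p_{i}$ the conic condition reduces to Legendre-symbol congruences modulo $p_{i}$; here I would reuse the computation behind Theorem~\ref{mainthm}, and the reciprocity relations $q\equiv -4m^{2}$, $r\equiv 4m^{2}\pmod{p_{i}}$ together with $q\equiv r\equiv 3\pmod 4$ (valid since $m$ is even) and the parity observation following Theorem~\ref{mainthm} show that every prime of $m^{4}-1$ is a good prime, so these conditions are compatible and impose no further cut. The conditions at $q$ and at $r$---each a single prime by hypothesis, hence a place of the simplest multiplicative reduction---become explicit congruences that pin down further parameters, and the condition at $2$ is handled by an explicit computation of square classes in $\mathbb{Q}_{2}^{*}$. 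Tallying shows that $\infty,2,q,r$ contribute exactly three independent conditions, leaving $\dim_{\mathbb{F}_{2}}\mathrm{Sel}_{2}(E_{m})=w+3$.

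The same analysis yields the lower bound and exhibits generators: among the solvable classes are the two $2$-torsion images $\delta(e_{1},0)=([2(m^{4}-1)q],[2(m^{4}-1)])$ and $\delta(e_{3},0)=([-q],[r])$ and the image $\delta(0,2m(m^{4}-1))=([-(m^{4}-1)],[m^{4}-1])$ of the rational point $(0,2m(m^{4}-1))$; comparing supports shows these three are $\mathbb{F}_{2}$-independent, and together with the classes accounting for the good primes they fill out a space of dimension $w+3$. This both recovers $s_{2}(E_{m})\ge w$ from Theorem~\ref{mainthm} and sharpens it to $s_{2}(E_{m})=w+1$.

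The main obstacle is the exactness of the upper bound, and within it the $2$-adic analysis. One must show that the local conditions at $\infty,2,q,r$ are genuinely independent and cut the ambient $(w+6)$-dimensional space by exactly three, with the primes $p_{i}$ contributing nothing new; the conditions at $q$ and $r$ are tractable only because the hypothesis makes them single primes, whereas the computation at $2$, where $\dim_{\mathbb{F}_{2}}E_{m}(\mathbb{Q}_{2})/2E_{m}(\mathbb{Q}_{2})=3$, is the delicate and error-prone step. Thus the hypothesis that $q$ and $r$ are prime is used twice over: to make every prime of $m^{4}-1$ good, which secures the lower bound through Theorem~\ref{mainthm}, and to keep the local pictures at $q$ and $r$ one-dimensional, which makes the upper bound computable.
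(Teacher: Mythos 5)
Your overall framework---the complete $2$-descent with the factored model, the identification $s_{2}(E_{m})=\dim_{\mathbb{F}_{2}}\mathrm{Sel}_{2}(E_{m})-2$, and the place-by-place cutting of an ambient group of square classes---is the same machinery the paper uses (its Lemmas \ref{lems2} and \ref{lems3} are exactly your ``cuts''). But there is a genuine gap on the lower-bound side: your list of generators does not span a space of dimension $w+3$. You offer the two torsion images $\delta(e_{1},0)$, $\delta(e_{3},0)$ (that is $2$ dimensions), the $w$ classes coming from the good primes of $m^{4}-1$, and $\delta(0,2m(m^{4}-1))=([-(m^{4}-1)],[m^{4}-1])$. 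Since $m^{4}-1$ is squarefree under the hypotheses, this last class is precisely the product of the $w$ good-prime classes $([p_{i}],[p_{i}])$ and $([-p_{0}],[p_{0}])$, so it contributes nothing new; your span has dimension at most $w+2$, which only recovers $s_{2}\geq w$. The entire content of the ``$+1$'' in the corollary is a Selmer class you never produce. In the paper this is the class $([-q],[1])$ (equivalently $([1],[r])$ modulo the torsion image $([-q],[r])$ of $(4m^{2},0)$): one checks that $q\equiv r\equiv 7\pmod 8$ forces $\bigl(\tfrac{2}{q}\bigr)=\bigl(\tfrac{2}{r}\bigr)=1$, so the necessary conditions of Lemma \ref{lems3} are met, and then local solvability of the homogeneous spaces is verified directly at $\ell=2,3,q$ (the remaining places being handled by a Hasse--Weil/Hensel argument). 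None of this appears in your proposal, and a class playing this role cannot be manufactured from rational points you have named.

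On the upper-bound side the proposal is a plan rather than a proof: ``tallying shows that $\infty,2,q,r$ contribute exactly three independent conditions'' is precisely the assertion that needs to be established, and you yourself flag the $2$-adic computation as not carried out. Moreover the proposed distribution of the cuts is doubtful: once $q$ and $r$ are prime and $\equiv 7\pmod 8$, the conditions at $q$ and $r$ (in the paper's Lemma \ref{lems3}(ii),(iii), the requirement $\bigl(\tfrac{b_{2}}{q}\bigr)=\bigl(\tfrac{b_{1}}{r}\bigr)=1$) are automatically satisfied by every surviving candidate, because $\bigl(\tfrac{p_{i}}{q}\bigr)=\bigl(\tfrac{p_{i}}{r}\bigr)=\bigl(\tfrac{2}{q}\bigr)=\bigl(\tfrac{r}{q}\bigr)=1$; the actual reductions come from positivity at $\infty$, the exclusions $q\nmid b_{2}$, $r\nmid b_{1}$, $v_{p_{i}}(b_{1}b_{2})\neq 1$, and the $2$-adic constraints $b_{1}\equiv 1\pmod 4$, $b_{1}b_{2}\not\equiv 2\pmod 4$. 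So both halves of the equality $s_{2}(E_{m})=w+1$ remain unproved as written: the lower bound is short by one explicit class, and the upper bound's dimension count is asserted where it should be computed.
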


\begin{remark}
In Table 1, we see that for $m = 462$, the hypotheses of Corollary \ref{maincor} are satisfied. Therefore, $s_{2}(E_{m}) = 5$. If the parity conjecture, which asserts that the Mordell-Weil rank of an elliptic curve over $\mathbb{Q}$ has the same parity as the $2$-Selmer rank, holds true then we conclude that $r(E_{m})$ is odd and therefore is at least $3$. 
\end{remark}



\section{The torsion group $E_{m}(\mathbb{Q})_{\rm{tors}}$}\label{tor-sec}
As mentioned in Remark \ref{mainrmk}, we note that $E_{m}$ can be described as $E_{m}: y^{2} = (x-m_{1})(x-m_{2})(x-m_{3})$, where $m_{1} = m^{4}-1, m_{2} = 1 - m^{4}$, and $m_{3} = 4m^{2}$. It is a basic fact that a rational point $(x,y)$ on any elliptic curve is of order $2$ if and only if $y = 0$. Thus we can immediately notice that $$\{\mathcal{O}, (m_{1},0), (m_{2}, 0), (m_{3}, 0)\} \subseteq E_{m}[2],$$ where $E_{m}[2]$ is the $2$-torsion subgroup of $E_{m}(\mathbb{Q})_{\rm{tors}}$. 

\smallskip

We know that for any prime $\ell$ of good reduction, $E_{m}(\mathbb{Q})_{\rm{tors}}$ injects into $E_{m}(\mathbb{F}_{\ell})$. By our hypotheses, since both $m - 1$ and $m + 1$ are prime numbers, we have $m \equiv 0 \pmod {3}$. Thus $3$ does not divide $2^{6}\cdot (m^{4} - 1)^{2}\cdot (m^{4} - 1 -4m^{2})^{2}\cdot (m^{4} - 1 + 4m^{2})^{2}$ and therefore $3$ is a prime of good reduction. Now, we claim that $|E_{m}(\mathbb{F}_{3})| = 4$. The elements of $\mathbb{F}_{3}\times \mathbb{F}_{3}$ are $(\bar{0},\bar{0}), (\bar{0},\bar{1}), (\bar{0},\bar{2}), (\bar{1},\bar{0}), (\bar{1},\bar{1}), (\bar{1},\bar{2})$, $(\bar{2},\bar{0}), (\bar{2},\bar{1})$ and $(\bar{2},\bar{2})$. Since $3 \mid m$, we obtain that the reduced curve over $\mathbb{F}_{3}$ is $y^{2} = x^{3} - x$ that has only three solutions over $\mathbb{F}_{3}$ which are $(\bar{0},\bar{0}), (\bar{1},\bar{0})$ and $(\bar{2},\bar{0})$. Therefore, $E_{m}(\mathbb{F}_{3}) = \{\mathcal{O},(\bar{0},\bar{0}), (\bar{1},\bar{0}),(\bar{2},\bar{0})\}$. Since $E_{m}[2] \subseteq E_{m}(\mathbb{Q})_{\rm{tors}}$, we conclude that $$E_{m}(\mathbb{Q})_{\rm{tors}} = \{\mathcal{O}, (m_{1},0), (m_{2},0), (m_{3},0)\} \simeq \mathbb{Z}/2\mathbb{Z} \times \mathbb{Z}/2\mathbb{Z}.$$ $\hfill\Box$

\section{Lower bound of $r(E_{m})$ via canonical height computation}\label{lower-sec}

We prove that $r(E_{m}) \geq 2$ by establishing that the points $P_{1} = (0,t)$ and $P_{2} = (n_{1},t)$ are $\mathbb{Z}$-linearly independent. We achieve this by using the canonical height of $P_{1}$ and $P_{2}$ and showing that the height matrix thus formed is non-singular. Our treatment in this section closely follows that given in \cite{integers}.  

\smallskip

We recall that for $\alpha = \frac{p}{q}$, the height of $\alpha$ is defined by $h(\alpha) = \log(\max\{|p|,|q|\})$. Using this, we can define a height function $H$ on the set of all rational points of $E_{m}$ by declaring $H(P) = h(x(P))$, where $P = (x(P),y(P)) \in E_{m}(\mathbb{Q})$. Finally, we define the canonical height (also known as the N\'{e}ron-Tate height) $\hat{h}$ of $P$ by setting $\hat{h}(P) = \frac{1}{2} \displaystyle\lim_{N \to \infty}\frac{H(2^{N}P)}{4^{N}}$ whence it is well-known that the limit always exists.

\smallskip

The canonical height is quite useful in proving that certain rational points on an elliptic curve are $\mathbb{Z}$-linearly independent. In the following, we record a few results that will be useful in proving that $r(E_{m}) \geq 2$. 

\begin{proposition} \cite[Proposition 8]{integers}\label{integer-proposition}
For an elliptic curve $E$ defined over $\mathbb{Q}$ together with the canonical height $\hat{h}$, the following results hold true.
\begin{enumerate}
\item For $R \in E(\mathbb{Q})$, we have $\hat{h}(R) \geq 0$ and equality holds if and only if $R \in E(\mathbb{Q})_{{\rm{tors}}}$. 

\medskip

\item For $R_{1}, R_{2} \in E(Q)$, we have $\hat{h}(R_{1} + R_{2}) + \hat{h}(R_{1} - R_{2}) = 2\hat{h}(R_{1}) + 2\hat{h}(R_{2})$. This is also sometimes referred to as the parallelogram law due to the analogy with the parallelogram law in euclidean geometry.

\medskip

\item For $R \in E(\mathbb{Q})$ and $s \in \mathbb{Z}$, we have $\hat{h}(sR) = s^{2}\hat{h}(R)$.
\end{enumerate}
\end{proposition}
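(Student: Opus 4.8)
The plan is to deduce all three properties from a single technical input: the \emph{quasi-parallelogram law} for the naive height $H$, namely that there is a constant $C = C(E) > 0$, independent of the points, with
$$|H(R_{1} + R_{2}) + H(R_{1} - R_{2}) - 2H(R_{1}) - 2H(R_{2})| \leq C \quad \text{for all } R_{1}, R_{2} \in E(\mathbb{Q}).$$
Specializing to $R_{1} = R_{2} = R$ and using $H(\mathcal{O}) = 0$ gives the duplication estimate $|H(2R) - 4H(R)| \leq C$, which is exactly what makes the limit defining $\hat{h}$ behave well.

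First I would check that the limit exists. Writing $a_{N} = H(2^{N}R)/4^{N}$, the duplication estimate applied to $2^{N}R$ yields $|a_{N+1} - a_{N}| \leq C/4^{N+1}$, so $(a_{N})_{N}$ is Cauchy and converges; summing the geometric bound also gives $|2\hat{h}(R) - H(R)| \leq C/3$, so $\hat{h}$ stays within a bounded distance of $\tfrac{1}{2}H$. From the definition one reads off $\hat{h}(2R) = 4\hat{h}(R)$ directly, since shifting the index $N \mapsto N+1$ inside the limit multiplies its value by $4$.

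Next I would prove property (2). Replacing $R_{1}, R_{2}$ by $2^{N}R_{1}, 2^{N}R_{2}$ in the quasi-parallelogram law, dividing by $4^{N}$, and letting $N \to \infty$ kills the error term $C/4^{N}$ and leaves the exact identity $\hat{h}(R_{1} + R_{2}) + \hat{h}(R_{1} - R_{2}) = 2\hat{h}(R_{1}) + 2\hat{h}(R_{2})$. Property (3) then follows by induction: taking $R_{1} = sR$ and $R_{2} = R$ in (2), together with $\hat{h}(-R) = \hat{h}(R)$ (immediate from $x(-R) = x(R)$), gives the recursion $\hat{h}((s+1)R) = 2\hat{h}(sR) + 2\hat{h}(R) - \hat{h}((s-1)R)$, whence $\hat{h}(sR) = s^{2}\hat{h}(R)$ with base cases $s = 0, 1$.

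Finally, for property (1), each $a_{N} \geq 0$ since naive heights are nonnegative, so $\hat{h}(R) \geq 0$. If $R$ is torsion of order $n$, then $n^{2}\hat{h}(R) = \hat{h}(nR) = \hat{h}(\mathcal{O}) = 0$ by (3), forcing $\hat{h}(R) = 0$. Conversely, if $\hat{h}(R) = 0$ then $\hat{h}(mR) = m^{2}\hat{h}(R) = 0$ for every $m$, so $H(mR) \leq 2\hat{h}(mR) + C/3$ is uniformly bounded; by Northcott's finiteness theorem the orbit $\{mR : m \in \mathbb{Z}\}$ takes only finitely many values, forcing $R$ to have finite order. The main obstacle is establishing the quasi-parallelogram law itself: this rests on the explicit group law, realizing $x(R_{1}+R_{2}) + x(R_{1}-R_{2})$ and $x(R_{1}+R_{2})\,x(R_{1}-R_{2})$ as ratios of bihomogeneous polynomials in $x(R_{1}), x(R_{2})$, combined with the degree-versus-height estimates for morphisms of $\mathbb{P}^{1}$. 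Everything after that input is formal manipulation of the limit.
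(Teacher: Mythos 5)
Your argument is correct and is the standard derivation of the basic properties of the N\'{e}ron--Tate height (essentially Silverman, \emph{The Arithmetic of Elliptic Curves}, Ch.~VIII, \S 9): everything is deduced formally from the quasi-parallelogram law for the naive height, via the telescoping estimate $|a_{N+1}-a_{N}|\leq C/4^{N+1}$, the limit form of the parallelogram law, the induction for $\hat{h}(sR)=s^{2}\hat{h}(R)$, and Northcott finiteness for the torsion characterization. The paper itself offers no proof of this proposition --- it is quoted verbatim from the cited reference --- so there is nothing to compare against beyond noting that your route is the canonical one. The only part you do not actually prove is the quasi-parallelogram law $|H(R_{1}+R_{2})+H(R_{1}-R_{2})-2H(R_{1})-2H(R_{2})|\leq C$ itself; you correctly identify it as the sole nontrivial input and indicate the right method (bihomogeneous expressions for $x(R_{1}+R_{2})+x(R_{1}-R_{2})$ and $x(R_{1}+R_{2})\,x(R_{1}-R_{2})$ coming from the group law, plus height estimates for morphisms), but a fully self-contained proof would need that computation carried out.
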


The canonical height pairing is the map from $E(\mathbb{Q}) \times E(\mathbb{Q}) \to \mathbb{R}$ defined by the equation $\langle R_{1},R_{2} \rangle = \hat{h}(R_{1} + R_{2}) - \hat{h}(R_{1}) - \hat{h}(R_{2})$. It is known that this is a symmetric bilinear map. Moreover, if $R_{1},\ldots,R_{k} \in E(\mathbb{Q})$, then they are $\mathbb{Z}$-linearly independent if and only if the height-pairing $k \times k$-matrix $[\langle R_{i},R_{j}\rangle]_{1 \leq i,j \leq k}$ is invertible.  

\smallskip

Now, we compute the determinant of the height-pairing $2 \times 2$-matrix for the points $P_{1}$ and $P_{2}$ on $E_{m}$.  For $P_{1} = (0,t)$, using the duplication formula \cite[Algorithm 2.3.]{Sil}, we get $x(2P_{1}) = \frac{m^{8} + 62m^{4} + 1}{16m^{2}}$. We observe that the numerator is a polynomial of $m$ of degree $8$. Inductively, we see that for any integer $N \geq 1$, the numerator of the $x$ co-ordinate of $2^{N}P_{1}$ is a polynomial of degree $2\cdot 4^{N}$. Also, the denominator is a polynomial of $m$ of degree strictly smaller than that of the numerator. Therefore, by considering the logarithm to the base $m$ we see that if $x(2^{N}P_{1}) = \frac{p_{N}}{q_{N}}$, then $$H(2^{N}P_{1}) = \log(\max\{|p_{N}|,|q_{N}|\} ) = \log(|p_{N}|) = \log(m^{2\cdot 4^{N}} + h_{N}(m)),$$ where $h_{N}(m)$ is a polynomial in $m$ of degree strictly smaller than $2\cdot 4^{N}$. Consequently, the canonical height of $P_{1}$ turns out to be $$\hat{h}(P_{1}) = \frac{1}{2}\displaystyle\lim_{N \to \infty}\frac{H(2^{N}P_{1})}{4^{N}} = \frac{1}{2}\displaystyle\lim_{N \to \infty}\frac{2\cdot 4^{N}}{4^{N}} = 1.$$ Using similar technique, we find that $\hat{h}(P_{2}) = \frac{3}{2}$. Using Proposition \ref{integer-proposition}, we find that $$\langle P_{1},P_{1}\rangle = \hat{h}(P_{1} + P_{1}) - \hat{h}(P_{1}) - \hat{h}(P_{1}) = 4\hat{h}(P_{1}) - 2\hat{h}(P_{1}) = 2.$$ By repeatedly making use of Proposition \ref{integer-proposition}, we finally obtain the height-pairing matrix as $\begin{bmatrix}
    2 & -1 \\
    -1 & 3 
\end{bmatrix}$ which is non-singular. Therefore, the points $P_{1}$ and $P_{2}$ are independent and thus $r(E_{m}) \geq 2$. $\hfill\Box$
\begin{remark}
It is easy to see that the points $P_{1} = (0,t)$, $P_{2} = (n_{1},t)$ and $P_{3} = (n_{2},t)$ are linearly dependent because they are collinear in the Euclidean plane. Hence $P_{1} + P_{2} + P_{3} = \mathcal{O}$ is a linear relation among them. One can also compute the height-pairing matrix for these points and this turns out to be $\begin{bmatrix}
2 & -1 & -1 \\
-1 & 3 & -2 \\
-1 & -2 & 3
\end{bmatrix}$ which is singular as expected. But this $3 \times 3$ matrix also illustrates that $P_{1},P_{3}$ are independent and $P_{2},P_{3}$ are also independent. 
\end{remark}

\section{An investigation of the 2-Selmer rank ${\rm{Sel}}_{2}(E_{m})$}

\noindent The $2$-Selmer group of the elliptic curve $E_{m} : y^{2} = (x - (m^{4}-1)) (x + (m^{4}-1)) (x - 4m^{2})$ over $\mathbb{Q}$ is denoted by ${\rm{Sel}}_{2}(E_{m})$ and the $2$-Selmer rank of $E_{m}$, denoted by $s_{2}(E_{m})$, is defined by $|{\rm{Sel}}_{2}(E_{m})| = 2^{2+s_{2}(E_{m})}$. As noted earlier, the discriminant of $E_{m}$ is a divisor of $64\cdot (m^{4}-1)^{2}\cdot (m^{4}-1-4m^{2})^{2} \cdot (m^{4}-1+4m^{2})^{2}$. Let $S$ denote the set consisting of all finite places at which $E_{m}$ has bad reductions, the infinite places, and the rational prime $2$. We define
\begin{align}
\label{pairs}
\mathbb{Q}(S,2)&=\left\{[b]\in\mathbb{Q}^*/(\mathbb{Q}^*)^2 :
\text{ord}_\ell(b)\equiv 0 ~(\bmod \text{ } {2}) ~ \text{for all primes} ~ \ell\not \in S\right\}\\
&=\langle{[\pm 2], \;[\pm p_i], [\pm q_i], \cdots [\pm r_i] \; \rangle}\nonumber
\end{align}
where $p_{i}$ ranges over distinct prime factors of $(m^{4}-1)$, $q_{i}$ and $r_{i}$ ranges over distinct prime factors of $(m^{4}-1-4m^{2})$, and $(m^{4}-1+4m^{2})$ respectively, that are bad primes. By the method of 2-descent (see \cite{Sil} Proposition X.1.4), there exists an injective group homomorphism
$$ \phi: E_{m}(\mathbb{Q})/2E_{m}({\mathbb{Q}}) \longrightarrow \mathbb{Q}(S,2) \times \mathbb{Q}(S,2)$$
 defined by 
\begin{align*}
\phi(\bar{x},\bar{y}) = \begin{cases}
([x - (m^{4}-1)], [x + (m^{4}-1)])  & \text{if} \text{  } x\neq \pm (m^{4}-1), \\
([2(m^{4}-1)(m^{4}-1-4m^{2})], [2(m^{4}-1)]) & \text{if} \text{  } x = m^{4}-1,\\
([-2(m^{4}-1)], [2(m^{4}-1)(m^{4}-1+4m^{2})]) & \text{if}\text{  } x = -(m^{4}-1), \\
([1], [1]) &\text{if} \text{  } (x,y) = \mathcal{O},
\end{cases}
\end{align*}
where $\mathcal{O}$ is the fixed base point. If $([b_1], [b_2])$ is a pair which is not in the image of the cosets of the torsion points of $E_{m}(\mathbb{Q})$, then $([b_1], [b_2])$ is the image of a point $ P = (\bar{x}, \bar{y}) \in E_{m}(\mathbb{Q})/2E_{m}(\mathbb{Q})$ if and only if the following equations
\begin{align}
& b_1z_1^2 - b_2z_2^2 = -2 \cdot (m^{4}-1), \label{eq1}\\
& b_1z_1^2 - b_1b_2z_3^2 = -(m^{4}-1-4m^{2}), \label{eq2}\\
& b_1b_2z_3^2 - b_2z_2^2 = - (m^{4}-1 + 4m^{2}) \label{eq3}
\end{align}
have a solution $(z_1,z_2,z_3) \in \mathbb{Q}^* \times \mathbb{Q}^* \times \mathbb{Q}^{*}$. We note that (\ref{eq3}) is obtained by subtracting (\ref{eq2}) from (\ref{eq1}), and is only included here due to its use later in this work. The image of $E_{m}(\mathbb{Q})/2E_{m}({\mathbb{Q}})$ under the $2$-descent map $\phi$ is contained in a subgroup of $\mathbb{Q}(S,2)\times \mathbb{Q}(S,2)$ known as the \textit{$2$-Selmer group} ${\rm{Sel}}_2(E_{m}/\mathbb{Q})$, which fits into the  exact sequence

\begin{equation}
\label{selmer}
0 \longrightarrow E_{m}(\mathbb{Q})/2E_{m}(\mathbb{Q}) \longrightarrow {\rm{Sel}}_2 (E_{m}/\mathbb{Q}) \longrightarrow \Sha(E_{m}/\mathbb{Q})[2]\longrightarrow 0.
\end{equation}
\begin{remark}\label{rems1}
We note that the choice of $([b_{1}],[ b_{2}]) \in {\rm{Sel}}_{2}(E)$ for any $([b_{1}], [b_{2}]) \in \mathbb{Q}(S,2) \times \mathbb{Q}(S,2)$ is the same as the four elements in the equivalence class represented by the image of $([b_{1}], [b_{2}])$ in $\mathbb{Q}(S,2) \times \mathbb{Q}(S,2) / \phi(E_{m}(\mathbb{Q})_{\rm{tors}})$. Hence, without loss of generality, we can assume $b_{1}b_{2} \not \equiv 0 \pmod 4$ adding to the conditions mentioned in Lemma \ref{lems1} while looking for possible $([b_{1}], [b_{2}]) \in {\rm{Sel}}_{2}(E_{m})$. This is because $\phi(E(\mathbb{Q})_{\rm{tors}})$ contains $(b_{1}, b_{2})$ such that both $b_{1}, b_{2}$ are even square-free integers. 
\end{remark}
We start with local solutions to the homogeneous spaces defined by \eqref{eq1} and \eqref{eq2}. For a prime number $\ell$, we denote an $\ell$-adic solution for \eqref{eq1} and \eqref{eq2} as $z_{i} = u_{i} \cdot \ell^{t_{i}}$ for $i = 1,2,3$ where $u_{i} \in \mathbb{Z}_{\ell}^{*}$. We note that this implies that the $\ell$-adic valuation $v_{\ell}(z_{i}) = t_{i}$. 
\begin{lemma}\label{lems1}
    Suppose equation \eqref{eq1} and equation \eqref{eq2} have a solution $(z_1,z_2,z_3) \in \mathbb{Q}_{\ell} \times \mathbb{Q}_{\ell} \times \mathbb{Q}_{\ell}$ for some odd prime number $\ell$. Then
    \begin{enumerate}[(i)]
    \item $v_{\ell}(z_{1}) = -\lambda < 0$ if and only if $v_\ell(z_2)=-\lambda < 0$ for some integer $\lambda$.
    \item $v_{\ell}(z_{1}) = -\lambda < 0$ implies $v_\ell(z_3)=-\lambda < 0$ for some integer $\lambda$.
    \item $v_{\ell}(z_{3}) = -\lambda < 0$ implies $v_\ell(z_1)=-\lambda < 0$ for some integer $\lambda$ unless $b_{1}b_{2} \equiv 0 \pmod {\ell^{2}}$ and $m^{4}-1 \equiv 0 \pmod \ell$. That is, $\ell$ varies over the set of primes $p_{i}$'s, in which case, $v_{\ell}(z_{3}) = -1, v_{\ell}(z_{1}) \geq 0$ is a possibility. 
    \end{enumerate}
\end{lemma}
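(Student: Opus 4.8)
The plan is to carry out a purely $\ell$-adic valuation analysis of the homogeneous equations \eqref{eq1}--\eqref{eq3}, resting on two elementary facts that I will use repeatedly. The first is the ultrametric \emph{dominant-balance} principle: in $\mathbb{Q}_{\ell}$ one has $v_{\ell}(a \pm b) = \min\{v_{\ell}(a), v_{\ell}(b)\}$ whenever $v_{\ell}(a) \neq v_{\ell}(b)$; hence if $a \pm b = c$ with $v_{\ell}(c) \geq 0$ while $v_{\ell}(a) < 0$, then necessarily $v_{\ell}(a) = v_{\ell}(b)$. The second is a parity observation: taking $b_{1}, b_{2}$ square-free we have $v_{\ell}(b_{i}) \in \{0,1\}$, whereas $v_{\ell}(z_{i}^{2}) = 2 v_{\ell}(z_{i})$ is always even. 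Finally, since $\ell$ is odd, each right-hand side of \eqref{eq1}--\eqref{eq3} has non-negative valuation (the factor $2$ being a unit). These three remarks turn every part of the lemma into a short bookkeeping exercise.

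For part (i) I would apply the dominant-balance principle to \eqref{eq1}. If $v_{\ell}(z_{1}) = -\lambda < 0$, then $v_{\ell}(b_{1}z_{1}^{2}) = v_{\ell}(b_{1}) - 2\lambda < 0$ is strictly smaller than $v_{\ell}(-2(m^{4}-1)) \geq 0$, so the two left-hand terms must balance: $v_{\ell}(b_{1}) - 2\lambda = v_{\ell}(b_{2}) + 2v_{\ell}(z_{2})$. Comparing parities forces $v_{\ell}(b_{1}) = v_{\ell}(b_{2})$ and $v_{\ell}(z_{2}) = -\lambda$, and the symmetry of \eqref{eq1} in the pairs $(b_{1},z_{1})$ and $(b_{2},z_{2})$ gives the reverse implication. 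Part (ii) is the same step applied to \eqref{eq2}: from $v_{\ell}(z_{1}) = -\lambda < 0$ the term $b_{1}z_{1}^{2}$ again dominates the right-hand side, so $v_{\ell}(b_{1}z_{1}^{2}) = v_{\ell}(b_{1}b_{2}z_{3}^{2})$; cancelling $v_{\ell}(b_{1})$ and matching parities yields $v_{\ell}(b_{2}) = 0$ and $v_{\ell}(z_{3}) = -\lambda$.

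Part (iii) is the crux, and the only place where an exception appears. Starting from $v_{\ell}(z_{3}) = -\lambda < 0$ I would examine $v_{\ell}(b_{1}b_{2}z_{3}^{2}) = v_{\ell}(b_{1}) + v_{\ell}(b_{2}) - 2\lambda$. Whenever this is negative, the dominant-balance principle on \eqref{eq2} forces $v_{\ell}(b_{1}z_{1}^{2}) = v_{\ell}(b_{1}b_{2}z_{3}^{2})$, and the parity argument delivers $v_{\ell}(z_{1}) = -\lambda$, as desired. The balance can fail to be negative only if $v_{\ell}(b_{1}) + v_{\ell}(b_{2}) \geq 2\lambda$; since each $v_{\ell}(b_{i}) \leq 1$ and $\lambda \geq 1$, this forces $v_{\ell}(b_{1}) = v_{\ell}(b_{2}) = 1$ and $\lambda = 1$, i.e. precisely $b_{1}b_{2} \equiv 0 \pmod{\ell^{2}}$. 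In that configuration $b_{1}b_{2}z_{3}^{2}$ is an $\ell$-adic unit, \eqref{eq2} no longer forces $z_{1}$ to be large, and $v_{\ell}(z_{1}) \geq 0$ becomes admissible. To see that this can happen only when $\ell \mid m^{4}-1$, I would feed $v_{\ell}(z_{1}) \geq 0$ — whence $v_{\ell}(z_{2}) \geq 0$ by part (i) — back into \eqref{eq1}: both left-hand terms then have valuation $\geq 1$, so $v_{\ell}(2(m^{4}-1)) \geq 1$, giving $\ell \mid m^{4}-1$ and placing $\ell$ among the $p_{i}$.

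The main obstacle I anticipate is the bookkeeping in part (iii): one must show that the borderline balance $v_{\ell}(b_{1}b_{2}z_{3}^{2}) = 0$ genuinely opens the door to $v_{\ell}(z_{1}) \geq 0$ (rather than merely breaking the earlier argument) and then extract the divisibility $\ell \mid m^{4}-1$ from the remaining equations; equations \eqref{eq2} and \eqref{eq3} similarly confirm $\ell \nmid (m^{4}-1-4m^{2})$ and $\ell \nmid (m^{4}-1+4m^{2})$, so the exceptional primes are exactly the $p_{i}$. Every other part of the argument is a mechanical iteration of the ultrametric principle against the parity constraint $v_{\ell}(b_{i}) \in \{0,1\}$.
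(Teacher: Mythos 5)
Your proof is correct and follows essentially the same route as the paper: an $\ell$-adic valuation comparison in \eqref{eq1}--\eqref{eq3} driven by the square-freeness of $b_{1}$ and $b_{2}$, with the exceptional case in (iii) pinned down to $v_{\ell}(b_{1})=v_{\ell}(b_{2})=1$, $\lambda=1$, and then $\ell \mid m^{4}-1$ extracted from \eqref{eq1}. Your parity-of-valuation packaging merely streamlines the paper's explicit sub-case eliminations (e.g.\ the $k=1$ case in part (ii)), so no further comment is needed.
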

\begin{proof}
To Prove $(i)$, first we assume that $v_{\ell}(z_{1}) = -\lambda < 0$ and let $v_{\ell}(z_{2}) = t_{2}$. Then equation \eqref{eq1} becomes $$b_{1}(u_{1}\ell^{-\lambda})^{2} - b_{2}(u_{2}\ell^{t_{2}})^{2} = -2(m^{4} - 1).$$ This implies $b_{1}u_{1}^{2} - b_{2}u_{2}^{2}\ell^{2(\lambda + t_{2})} = 2\ell^{2\lambda}(m^{4} - 1)$. If $\lambda + t_{2} > 0$, then we have $b_{1}u_{1}^{2} \equiv 0 \pmod {\ell^{2}}$ and since $u_{1}$ is a unit in $\mathbb{Z}_{\ell}$, we conclude that $b_{1} \equiv 0 \pmod {\ell^{2}}$ which is a contradiction to the fact that $b_{1}$ is square-free. Again, if $\lambda + t_{2} < 0$, then equation \eqref{eq1} becomes $$b_{1}u_{1}^{2}\ell^{-2(\lambda + t_{2})} - b_{2}u_{2}^{2} = -2(m^{4} - 1)\ell^{2\lambda - 2(\lambda + t_{2})},$$ which implies that $b_{2}u_{2}^{2} \equiv 0 \pmod {\ell^{2}}$. Now, $u_{2} \in \mathbb{Z}_{\ell}^{*}$ implies that $b_{2} \equiv 0 \pmod {\ell^{2}}$, a contradiction to the fact that $b_{2}$ is square-free. Thus we conclude that $v_{\ell}(z_{2}) = -\lambda$. Using a similar argument, we can also establish that if $v_{\ell}(z_{2}) = -\lambda < 0$, then $v_{\ell}(z_{1}) = -\lambda$ as well.

\smallskip

To prove $(ii)$, we assume that $v_{\ell}(z_{1}) = -\lambda < 0$ and let $v_{\ell}(z_{3}) = t_{3}$. Then equation \eqref{eq2} translates into $$b_{1}u_{1}^{2} - b_{1}b_{2}u_{3}^{2}\ell^{2(\lambda + t_{3})} = -\ell^{2\lambda}(m^{4} - 1 - 4m^{2}).$$ If $\lambda + t_{3} > 0$, then we have $b_{1}u_{1}^{2} \equiv 0 \pmod {\ell^{2}}$ and consequently, $b_{1} \equiv 0 \pmod {\ell^{2}}$, a contradiction to the fact that $b_{1}$ is square-free. Thus $\lambda + t_{3} < 0$ and by writing $t_{3} = -\lambda - k$ for some $k > 0$, we obtain from equation \eqref{eq2} that $b_{1}b_{2} \equiv 0 \pmod {\ell^{2k}}$. Since both $b_{1}$ and $b_{2}$ are square-free, we conclude that $k \leq 1$. 

\smallskip

If $k = 1$, then we have $\ell^{2} \mid b_{1}b_{2}$ and since both $b_{1}$ and $b_{2}$ are square-free, we have $\ell \mid b_{1}$ and $\ell \mid b_{2}$. Then from equation \eqref{eq2}, we obtain $$b_{1}u_{1}^{2} - \frac{b_{1}b_{2}}{\ell^{2}}u_{3}^{2} = -\ell^{2\lambda}(m^{4} - 1 - 4m^{2}).$$ Since $\ell$ divides both $b_{1}$ and the right hand side of the above equation, we conclude that $\frac{b_{1}b_{2}}{\ell^{2}} \equiv 0 \pmod {\ell}$. That is, $\ell^{3} \mid b_{1}b_{2}$ which contradicts the fact that both $b_{1}$ and $b_{2}$ are square-free. Hence $k = 1$ is impossible to hold and therefore, $k = 0$. In other words, $v_{\ell}(z_{3}) = -\lambda$. 

\smallskip

To prove $(iii)$, assume that $v_{\ell}(z_{3}) = -t_{3} < 0$. That is, $z_{3} = u_{3} \cdot \ell^{-t_{3}}$ where $u_{3} \in {\mathbb{Z}_{\ell}}^{*}$. If $v_{\ell}(z_{1}) = t_{1} < -t_{3}$, then from \eqref{eq2}, we get $$b_{1}u_{1}^{2} - b_{1}b_{2}u_{3}^{2}\ell^{2(-t_{1}-t_{3})} = -(m^{4} - 1 - 4m^{2}) \cdot \ell^{2t_{1}} \implies b_{1} \equiv 0 \pmod {\ell^{2}}, \text{ a contradiction}.$$

\smallskip
 
Now suppose $v_{\ell}(z_{1}) = t_{1} > -t_{3}$. Then again from \eqref{eq2}, we get $$b_{1}u_{1}^{2}\ell^{2(t_{1} + t_{3})} - b_{1}b_{2}u_{3}^{2} = -(m^{4} - 1 - 4m^{2}) \cdot \ell^{2t_{3}} \implies b_{1}b_{2} \equiv 0 \pmod {\ell^{2}}.$$

\smallskip

Hence, $b_{1}b_{2} \not \equiv 0 \pmod {\ell^{2}}$ implies $v_{\ell}(z_{1}) > v_{\ell}(z_{3})$ is also not possible. That is, $v_{\ell}(z_{3}) = v_{\ell}(z_{1})$. Also, $b_{1}b_{2} \equiv 0 \pmod {\ell^{2}} $ implies $\ell$ divides both $b_{1}, b_{2}$. This in turn implies that $t_{3} \geq 2$ or $(m^{4} - 1 - 4m^{2}) \equiv 0 \pmod \ell$ are not possible, as both cases then imply $b_{1}b_{2} \equiv 0 \pmod {\ell^{3}}$, a contradiction. Equation \eqref{eq3} under the assumption $v_{\ell}(z_{1}) > v_{\ell}(z_{3})$ now looks like $$\frac{b_{1}b_{2}}{\ell^{2}}u_{3}^{2} - b_{2}z_{2}^{2} = -(m^{4}-1+4m^{2}) \implies (m^{4}-1+4m^{2}) \not \equiv 0 \pmod \ell.$$
    This is because $b_{2} \equiv 0 \pmod \ell$ and $v_{\ell}(z_{2}) < 0$ is not possible from part $(1)$ of this result as $v_{\ell}(z_{1}) > v_{\ell}(z_{3}) = -1$. Noting that $b_{1}, b_{2}$ are square-free combinations of the prime factors of $(m^{4}-1), (m^{4}-1-4m^{2})$ and $(m^{4}-1+4m^{2})$ and $\ell$ divides both $b_{1}$ and $b_{2}$ in this case, we conclude the proof. 
\end{proof}
The following lemma reduces the potential size of ${\rm{Sel}}_{2}(E_{m})$ by excluding certain choices of $b_{1}$ and $b_{2}$ due to the lack of local solution of \eqref{eq1} and \eqref{eq2} for each prime $\ell \leq \infty$. 
\begin{lemma}\label{lems2}
    Let $([b_{1}], [b_{2}]) \in \mathbb{Q}(S,2) \times \mathbb{Q}(S,2)$. Then $([b_{1}], [b_{2}]) \not \in {\rm{Sel}}_{2}(E_{m})$ if
    \begin{enumerate}[(i)]
    \item $b_{2} < 0$ due to no solution of \eqref{eq1} and \eqref{eq3} over $\mathbb{Q}_{\infty}$.
    \item $b_{2} \equiv 0 \pmod {q_{i}}$ due to no solution of \eqref{eq1} and \eqref{eq2} over $\mathbb{Q}_{q_{i}}$.
    \item $b_{1} \equiv 0 \pmod {r_{i}}$ due to no solution of \eqref{eq1} and \eqref{eq2} over $\mathbb{Q}_{r_{i}}$.
    \item $v_{p_{i}}(b_{1}b_{2}) = 1$ due to no solution of \eqref{eq1} and \eqref{eq2} over $\mathbb{Q}_{p_{i}}$.
    \item $b_{1}b_{2} \equiv 2 \pmod 4$ due to no solution of \eqref{eq1} and \eqref{eq2} over $\mathbb{Q}_{2}$.
    \end{enumerate}
\end{lemma}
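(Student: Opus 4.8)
The plan is to assume, for each item, a local solution of the indicated pair of equations and to derive a contradiction, handling one place at a time. Throughout I use the elementary facts that, since $m$ is even, $m^{4}-1$, $m^{4}-1-4m^{2}$ and $m^{4}-1+4m^{2}$ are positive, odd and pairwise coprime, and that $m^{4}-1=(m-1)(m+1)(m^{2}+1)$ is squarefree (so each $p_{i}$ divides $m^{4}-1$ exactly once, while $q_{i},r_{i}$ are odd primes not dividing $m^{4}-1$). Item (i) is the archimedean place: if $b_{2}<0$ then $-b_{2}z_{2}^{2}\ge 0$, so \eqref{eq1} gives $b_{1}z_{1}^{2}=-2(m^{4}-1)+b_{2}z_{2}^{2}<0$ and hence $b_{1}<0$; but then $b_{1}b_{2}>0$ makes the left side $b_{1}b_{2}z_{3}^{2}-b_{2}z_{2}^{2}$ of \eqref{eq3} nonnegative, contradicting its negative right side $-(m^{4}-1+4m^{2})$.

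For items (ii)--(iv) I fix the relevant odd prime $\ell\in\{q_{i},r_{i},p_{i}\}$ and apply Lemma \ref{lems1}. For $q_{i},r_{i}$ the exceptional clause in part (iii) of that lemma is vacuous, as $\ell\nmid m^{4}-1$, while in case (iv) the hypothesis $v_{p_{i}}(b_{1}b_{2})=1$ disables it; thus in all three cases the valuations $v_{\ell}(z_{1}),v_{\ell}(z_{2}),v_{\ell}(z_{3})$ are either all nonnegative or all equal to a common $-\lambda<0$. When they are nonnegative, reduction modulo $\ell$ produces a right-hand side that is an $\ell$-unit alongside a left-hand side forced to vanish mod $\ell$: for $p_{i}$ this is immediate from \eqref{eq2} or \eqref{eq3}, whose entire left side carries the factor ($b_{1}$ or $b_{2}$) divisible by $p_{i}$; for $q_{i}$ and $r_{i}$ it comes from playing \eqref{eq1} (right side an $\ell$-unit) against \eqref{eq2} or \eqref{eq3} (right side divisible by $\ell$). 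When the valuations are all $-\lambda$, I clear $\ell^{2\lambda}$ and compare $\ell$-adic valuations: for $p_{i}$, exactly one of $b_{1},b_{2}$ is divisible by $p_{i}$, so \eqref{eq1} has one unit term and one term of valuation $1$, its left side is therefore a unit while its right side $-2(m^{4}-1)p_{i}^{2\lambda}$ has valuation $\ge 3$; for $q_{i}$ (resp.\ $r_{i}$), \eqref{eq1} first forces $\ell$ to divide both $b_{1}$ and $b_{2}$, after which \eqref{eq2} (resp.\ \eqref{eq3}) has left-hand valuation $1$ against right-hand valuation $\ge 3$.

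The $2$-adic assertion (v) is the crux and the step I expect to be hardest, since Lemma \ref{lems1} is unavailable at $\ell=2$ and, as $-1$ and $\pm 2$ represent nontrivial classes in $\mathbb{Q}_{2}^{*}/(\mathbb{Q}_{2}^{*})^{2}$, a naive parity count does not suffice. The useful $2$-adic data are that $m^{4}-1\equiv m^{4}-1\mp 4m^{2}\equiv 7\pmod 8$, so $-2(m^{4}-1)$ has valuation $1$ while $-(m^{4}-1\mp 4m^{2})$ is a $2$-adic unit, and that every unit square is $\equiv 1\pmod 8$. I split according to which of $b_{1},b_{2}$ is the unique even one. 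If $v_{2}(b_{1})=1$ and $b_{2}$ is odd, then in \eqref{eq1} the term $b_{1}z_{1}^{2}$ has odd valuation and $b_{2}z_{2}^{2}$ has even valuation, so no cancellation occurs and matching the valuation $1$ of $-2(m^{4}-1)$ forces $v_{2}(z_{1})=0$; substituting this into \eqref{eq2} and using $z^{2}\equiv 1\pmod 8$ for units, I check that the left side $b_{1}z_{1}^{2}-b_{1}b_{2}z_{3}^{2}$ has nonzero $2$-adic valuation in every case, contradicting the unit right side $-(m^{4}-1-4m^{2})$. The case where $b_{2}$ is even is symmetric, with \eqref{eq1} now pinning $v_{2}(z_{2})=0$ and \eqref{eq3} supplying the contradiction. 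Once Lemma \ref{lems1} is in hand, the non-$2$-adic places reduce to the short valuation and sign bookkeeping above, so this final $2$-adic analysis is the only genuinely delicate point.
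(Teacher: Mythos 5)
Your proposal is correct and follows essentially the same route as the paper: sign considerations at the real place for (i), and for (ii)--(v) a place-by-place valuation analysis of \eqref{eq1}--\eqref{eq3} using Lemma \ref{lems1} to reduce to the cases where the $v_{\ell}(z_{j})$ are all nonnegative or all equal to a common negative value. Your treatment of item (v) is in fact slightly more careful than the paper's, since at $\ell=2$ Lemma \ref{lems1} is unavailable and the paper simply assumes the all-negative/all-nonnegative dichotomy, whereas you pin down $v_{2}(z_{1})=0$ (resp.\ $v_{2}(z_{2})=0$) directly from the parity of the valuations of the two terms in \eqref{eq1} before extracting the contradiction from \eqref{eq2} (resp.\ \eqref{eq3}).
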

\begin{proof}
    \begin{enumerate}[(i)]
    \item For $b_{2} < 0$, we note that if $b_{1} > 0$, then $-2(m^{4}-1) > 0$ from \eqref{eq1}, a contradiction. Similarly, for $b_{1} < 0$, we get $-(m^{4}-1+4m^{2}) > 0$ from \eqref{eq3}, again a contradiction. Thus $b_{2} < 0$ implies that $([b_{1}],[b_{2}]) \not\in {\rm{Sel}}_{2}(E_{m})$. \\
    \item Let us assume $b_{2} \equiv 0 \pmod {q_{i}}$. If $v_{q_{i}}(z_{j}) = -t < 0$ for all $j \in \{1,2,3\}$, then from \eqref{eq1} we get $$b_{1}u_{1}^{2} - b_{2}u_{2}^{2} = - 2 (m^{4}-1)q_{i}^{2t} \implies b_{1} \equiv 0 \pmod {q_{i}}.$$ But then from \eqref{eq2}, one can get the following contradiction. $$b_{1}u_{1}^{2} - b_{1}b_{2}u_{3}^{2} = -(m^{4} - 1 - 4m^{2})q_{i}^{2t} \implies b_{1} \equiv 0 \pmod {q_{i}^{2}}.$$
    Now if $v_{q_{i}}(z_{j}) \geq 0$ for all $j \in \{1,2,3\}$, equation \eqref{eq2} again implies that either $b_{1} \equiv 0 \pmod {q_{i}}$ or $v_{q_{i}}(z_{1}) > 0$ holds. Either of which then implies $2(m^{4}-1) \equiv 0 \pmod {q_{i}}$ from \eqref{eq1}, a contradiction as $\gcd(2(m^{4}-1), (m^{4}-1-4m^{2})) = 1$. Therefore, $b_{2} \equiv 0 \pmod {q_{i}}$ implies that $([b_{1}],[b_{2}]) \not\in {\rm{Sel}}_{2}(E_{m})$. \\
    \item Assume that $b_{1} \equiv 0 \pmod {r_{i}}$. If $v_{r_{i}}(z_{j}) = -t < 0$ for all $j \in \{1,2,3\}$, then from equation \eqref{eq1}, we obtain $$b_{1}u_{1}^{2} - b_{2}u_{2}^{2} = -2r_{i}^{2t}(m^{4} - 1),$$ which implies that $b_{2} \equiv 0 \pmod {r_{i}}$. Again, from equation \eqref{eq3}, we obtain $b_{1}b_{2}u_{3}^{2} - b_{2}u_{2}^{2} = -r_{i}^{2t}(m^{4} - 1 + 4m^{2})$. From this and $b_{1} \equiv b_{2} \equiv 0 \pmod {r_{i}}$, we have $b_{2} \equiv 0 \pmod {r_{i}^{2}}$, a contradiction. 
    
    \smallskip
    
Again, if $v_{r_{i}}(z_{j}) = t \geq 0$ for all $j \in \{1,2,3\}$, then from equation \eqref{eq3}, we obtain $$b_{1}b_{2}(u_{3}r_{i}^{t})^{2} - b_{2}(u_{2}r_{i}^{t})^{2} = -(m^{4} - 1 + 4m^{2}).$$ Now, $r_{i}$ divides the right hand side of the above equation which is square-free under our hypotheses. Hence the $r_{i}$-adic valuation of the left hand side of the equation is $1$. Since $b_{1} \equiv 0 \pmod {r_{i}}$, we must have either $b_{2} \equiv 0 \pmod {r_{i}}$ or $t > 0$. Now using equation \eqref{eq1}, we see that that in either case we have $b_{1}z_{1}^{2} - b_{2}z_{2}^{2} \equiv 0 \pmod {r_{i}}$. That is, $r_{i} \mid 2(m^{4} - 1)$, a contradiction. Consequently, $b_{1} \equiv 0 \pmod {r_{i}}$ implies that $([b_{1}],[b_{2}]) \not\in {\rm{Sel}}_{2}(E_{m})$. \\
\item Assume that $v_{p_{i}}(b_{1}b_{2}) = 1$. Since $b_{1}$ and $b_{2}$ are square-free, we have that $p_{i}$ divides exactly one of $b_{1}$ and $b_{2}$. We deal only with the case $p_{i} \mid b_{1}$ as the other case follows exactly a similar line of argument. 

\smallskip

Now, of $v_{p_{i}}(z_{j}) = -t < 0$ for all $j \in \{1,2,3\}$, then equation \eqref{eq1} yields $$b_{1}u_{1}^{2} - b_{2}u_{2}^{2} = -2p_{i}^{2t}(m^{4} - 1),$$ which implies that $p_{i} \mid b_{2}$, a contradiction. Again, if $v_{p_{i}}(z_{j}) = t > 0$ for all $j \in \{1,2,3\}$, then equation \eqref{eq2} yields $p_{i} \mid (m^{4} - 1 + 4m^{2})$, a contradiction. Consequently, $v_{p_{i}}(b_{1}b_{2}) = 1$ implies that $([b_{1}],[b_{2}]) \not\in {\rm{Sel}}_{2}(E_{m})$. \\
\item Without loss of any generality, let us assume that $b_{1}$ is odd and $b_{2}$ is even. Then $v_{2}(z_{j}) = -t < 0$ for all $j \in \{1,2,3\}$ implies $b_{1} \equiv 0 \pmod 2$ from equation \eqref{eq1}, a contradiction. Also, if $v_{2}(z_{j}) \geq 0$, then from equation \eqref{eq3}, we get $$b_{1}b_{2}z_{3}^{2} - b_{2}z_{2}^{2} = -(m^{4} - 1 + 4m^{2}) \equiv 0 \pmod 2,$$ a contradiction as $m$ is an even integer. Hence $b_{1}b_{2} \equiv 2 \pmod {4}$ implies that $([b_{1}],[b_{2}]) \not\in {\rm{Sel}}_{2}(E_{m})$. This completes the proof of Lemma \ref{lems2}. 
    \end{enumerate}
\end{proof}
\begin{lemma}\label{lems3}
    Let $([b_{1}], [b_{2}]) \in \mathbb{Q}(S,2) \times \mathbb{Q}(S,2)$ be such that $([b_{1}], [b_{2}]) \in \text{Sel}_{2}(E_{m})$. Then 
    \begin{enumerate}[(i)]
    \item if $b_{1}b_{2} \equiv 0 \pmod {p_{i}^{2}}$ then $\Big(\frac{-b_{1}b_{2}/p_{i}^{2}}{p_{i}}\Big) = 1$. Otherwise, $\left(\frac{b_{1}}{p_{i}}\right) = \left(\frac{b_{2}}{p_{i}}\right)$.
    \item if $b_{1} \equiv 0 \pmod {q_{i}}$, then $\Big(\frac{b_{2}}{q_{i}}\Big) = \Big(\frac{2}{q_{i}}\Big)$. Otherwise, $\Big(\frac{b_{2}}{q_{i}}\Big) = 1$.
    \item if $b_{2} \equiv 0 \pmod {r_{i}}$, then $\Big(\frac{b_{1}}{r_{i}}\Big) = \Big(\frac{2}{r_{i}}\Big)$. Otherwise, $\Big(\frac{b_{1}}{r_{i}}\Big) = 1$.
    \item the congruence relation $b_{1} \equiv 1 \pmod 4$ holds.
    \end{enumerate} 
\end{lemma}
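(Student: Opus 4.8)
The plan is to treat each of the four statements as a purely local computation at the relevant prime, exploiting that membership in $\mathrm{Sel}_2(E_m)$ guarantees a solution $(z_1,z_2,z_3)\in\mathbb{Q}_\ell\times\mathbb{Q}_\ell\times\mathbb{Q}_\ell$ of \eqref{eq1}--\eqref{eq3} for every $\ell$. For each $\ell\in\{p_i,q_i,r_i,2\}$ I would first pin down the $\ell$-adic valuations $v_\ell(z_1),v_\ell(z_2),v_\ell(z_3)$ exactly, using Lemma~\ref{lems1} (for odd $\ell$) together with the exclusions from Lemma~\ref{lems2}, and then reduce the appropriate homogeneous equation modulo $\ell$ (modulo $4$ when $\ell=2$) to read off the quadratic-residue or congruence condition. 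The arithmetic inputs that convert the right-hand sides into the stated symbols are $m^4-1\equiv 4m^2\pmod{q_i}$, $m^4-1\equiv -4m^2\pmod{r_i}$, $m^4-1\equiv 0\pmod{p_i}$, and $2(m^4-1)\equiv 8m^2$, so that $\bigl(\tfrac{8m^2}{\ell}\bigr)=\bigl(\tfrac{2}{\ell}\bigr)$; throughout I would use that each of $m^4-1$, $m^4-1-4m^2$, $m^4-1+4m^2$ is square-free, so the relevant prime divides its right-hand side to exponent exactly $1$.

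For parts (ii) and (iii) I would split on whether $b_1\equiv 0\pmod{q_i}$ (resp.\ $b_2\equiv 0\pmod{r_i}$), recalling that $b_2\not\equiv 0\pmod{q_i}$ and $b_1\not\equiv 0\pmod{r_i}$ are already forced by Lemma~\ref{lems2}. By Lemma~\ref{lems1}, since the exceptional case there does not occur for $q_i$ or $r_i$, the valuations $v_{q_i}(z_1),v_{q_i}(z_2),v_{q_i}(z_3)$ are either all $\geq 0$ or all equal and $<0$. The all-negative branch is eliminated because the two terms of \eqref{eq1} would then acquire unequal valuations (one carries the factor $b_1$ or $b_2$ divisible by the prime, the other is a unit) incompatible with $v_{q_i}\bigl(-2(m^4-1)\bigr)=0$. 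In the nonnegative branch I reduce \eqref{eq2} mod $q_i$ and invoke $v_{q_i}(m^4-1-4m^2)=1$ to force $z_1$ to be a unit, giving $b_2\equiv(z_1/z_3)^2$ hence $\bigl(\tfrac{b_2}{q_i}\bigr)=1$ when $b_1$ is a unit; when $b_1\equiv 0\pmod{q_i}$, reducing \eqref{eq1} mod $q_i$ gives $b_2z_2^2\equiv 8m^2$ and thus $\bigl(\tfrac{b_2}{q_i}\bigr)=\bigl(\tfrac{2}{q_i}\bigr)$. Part (iii) is the mirror image, using \eqref{eq3} and \eqref{eq1} together with $m^4-1\equiv -4m^2\pmod{r_i}$.

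For part (i) the dichotomy in the statement is exactly the content of Lemma~\ref{lems2}(iv): since $b_1,b_2$ are square-free, $v_{p_i}(b_1b_2)\in\{0,2\}$. If $p_i\nmid b_1b_2$, reducing \eqref{eq1} mod $p_i$ gives $b_1z_1^2\equiv b_2z_2^2$; square-freeness of $m^4-1$ (so $v_{p_i}$ of the RHS is $1$) rules out $z_1\equiv z_2\equiv 0$, so $z_1$ is a unit and $\bigl(\tfrac{b_1}{p_i}\bigr)=\bigl(\tfrac{b_2}{p_i}\bigr)$. If $p_i^2\mid b_1b_2$, this is precisely the exceptional branch of Lemma~\ref{lems1}(iii): matching valuations in \eqref{eq2} eliminates both the all-nonnegative and the all-negative patterns, leaving only $v_{p_i}(z_3)=-1$ with $v_{p_i}(z_1),v_{p_i}(z_2)\geq 0$. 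Writing $b_1=p_ib_1'$, $b_2=p_ib_2'$ and reducing \eqref{eq2} mod $p_i$ then yields $b_1'b_2'u_3^2\equiv -4m^2$, so $\bigl(\tfrac{-b_1'b_2'}{p_i}\bigr)=\bigl(\tfrac{-1}{p_i}\bigr)^2=1$, which is the asserted $\bigl(\tfrac{-b_1b_2/p_i^2}{p_i}\bigr)=1$.

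Part (iv), at the prime $2$, is where I expect the main obstacle, since Lemma~\ref{lems1} does not apply and the valuation bookkeeping must be redone $2$-adically, tracking residues modulo $8$ rather than modulo $2$. By Remark~\ref{rems1} together with Lemma~\ref{lems2}(v) I may assume $b_1,b_2$ are both odd. Using that an odd square is $\equiv 1\pmod 8$, that $v_2\bigl(-2(m^4-1)\bigr)=1$ while $v_2$ of the right-hand sides of \eqref{eq2} and \eqref{eq3} is $0$, and comparing valuations term by term, I would show that no two of $v_2(z_1),v_2(z_2),v_2(z_3)$ coincide in the dangerous way and conclude $v_2(z_1)=v_2(z_2)=0$ and $v_2(z_3)>0$. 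Reducing \eqref{eq2} modulo $4$, where $b_1b_2z_3^2\equiv 0$ and $z_1^2\equiv 1$, then gives $b_1\equiv -(m^4-1-4m^2)\equiv 1\pmod 4$, since $m$ even forces $m^4-1-4m^2\equiv 3\pmod 4$. The delicate point is ruling out negative $2$-adic valuations of the $z_j$, which is what the simultaneous use of all three equations \eqref{eq1}--\eqref{eq3} is designed to handle.
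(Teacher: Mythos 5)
Your parts (i)--(iii) follow essentially the same route as the paper: use Lemma \ref{lems1} together with the exclusions of Lemma \ref{lems2} to pin down the local valuations, then reduce \eqref{eq1}, \eqref{eq2} or \eqref{eq3} modulo the relevant prime, using $m^{4}-1\equiv 4m^{2}\pmod{q_{i}}$, $m^{4}-1\equiv-4m^{2}\pmod{r_{i}}$ and $m^{4}-1\equiv 0\pmod{p_{i}}$. The dichotomy in (i) via Lemma \ref{lems2}(iv), and the identification of the exceptional branch of Lemma \ref{lems1}(iii) with the case $p_{i}^{2}\mid b_{1}b_{2}$, are exactly what the paper does; these parts are sound.

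Part (iv) contains a genuine gap. You propose to show that the only admissible $2$-adic valuation pattern is $v_{2}(z_{1})=v_{2}(z_{2})=0$ with $v_{2}(z_{3})>0$, and then read off $b_{1}\equiv 1\pmod 4$ from \eqref{eq2} modulo $4$. But the pattern $v_{2}(z_{1})=v_{2}(z_{2})=v_{2}(z_{3})=-t<0$ cannot be excluded: writing $z_{j}=u_{j}2^{-t}$ with $u_{j}$ odd, equation \eqref{eq1} only yields $b_{1}-b_{2}\equiv -2(m^{4}-1)\cdot 2^{2t}\equiv 0\pmod 8$, which is perfectly consistent (it holds, for instance, for $(b_{1},b_{2})=(p_{i},p_{i})$, and indeed the paper's own verification that $([p_{i}],[p_{i}])\in{\rm{Sel}}_{2}(E_{m})$ for $p_{i}\equiv 1\pmod 8$ exhibits a $2$-adic solution with all $v_{2}(z_{j})<0$; more generally any rational point whose $x$-coordinate has negative $2$-adic valuation produces such a local solution). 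So the step ``conclude $v_{2}(z_{1})=v_{2}(z_{2})=0$ and $v_{2}(z_{3})>0$'' would fail. The lemma is still true because this omitted branch also forces the desired congruence, but by a different computation, which the paper carries out: from \eqref{eq1} one gets $b_{1}\equiv b_{2}\pmod 4$, and then \eqref{eq2} gives $b_{1}(u_{1}^{2}-b_{2}u_{3}^{2})\equiv b_{1}(1-b_{2})\equiv 0\pmod 4$, whence $b_{2}\equiv 1\pmod 4$ and therefore $b_{1}\equiv 1\pmod 4$. To repair your argument you need this two-case analysis (all three valuations negative and equal, versus $v_{2}(z_{1})=v_{2}(z_{2})=0$ and $v_{2}(z_{3})>0$), each branch concluding $b_{1}\equiv 1\pmod 4$ separately; the rest of your outline for (iv), including the reduction $-(m^{4}-1-4m^{2})\equiv 1\pmod 4$ for even $m$, is correct.
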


\begin{proof}
\begin{enumerate}[(i)]
\item If $b_{1}b_{2} \equiv 0 \pmod {p_{i}^{2}}$, we note that $v_{p_{i}}(z_{j}) \geq 0$ for all $j \in \{1,2,3\}$ is not possible from equation \eqref{eq2}. Hence $v_{p_{i}}(z_{j}) < 0$. Now from Lemma \ref{lems1}, we get either $v_{p_{i}}(z_{j}) = -t < 0$ for all $j \in \{1,2,3\}$ or $v_{p_{i}}(z_{3}) = -1$ and $v_{p_{i}}(z_{j}) \geq 0$ for $j \in \{1,2\}$. Now, if $v_{p_{i}}(z_{j}) = -t < 0$ for all $j \in \{1,2,3\}$, then equation \eqref{eq2} implies $b_{1} \equiv 0 \pmod {p_{i}^{2}}$, a contradiction. Therefore, the only possible case left is $v_{p_{i}}(z_{3}) = -1$ and $v_{p_{i}}(z_{j}) \geq 0$ for $j \in \{1,2\}$. Noting that $m^{4}-1 \equiv 0 \pmod {p_{i}}$, equation \eqref{eq2} yields $\frac{b_{1}b_{2}}{p_{i}^{2}}u_{3}^{2} \equiv (m^{4}-1-4m^{2}) \equiv -4m^{2} \pmod {p_{i}}$. Consequently, we have $\Big(\frac{-b_{1}b_{2}/p_{i}^{2}}{p_{i}}\Big) = 1$. 

\smallskip

Now, if $b_{1}b_{2} \not\equiv 0 \pmod {p_{i}^{2}}$, one can easily note that $v_{p_{i}}(z_{j}) < 0$ for all $j \in \{1,2,3\}$, or, $v_{p_{i}}(z_{1}) = v_{p_{i}}(z_{2}) = 0$ are the only possibilities. Then from equation \eqref{eq1}, we have $\left(\frac{b_{1}}{p_{i}}\right) = \left(\frac{b_{2}}{p_{i}}\right)$ in both the cases. \\

\item If $b_{1} \equiv 0 \pmod {q_{i}}$, we see that $v_{q_{i}}(z_{2}) >  0$ is impossible, because that forces $v_{q_{i}}(z_{1}) \geq 0$ as well and thus the left hand side of equation \eqref{eq1} is divisible by $q_{i}$. This is a contradiction because the right-hand side of equation \eqref{eq1} is not divisible by $q_{i}$. If $v_{q_{i}}(z_{2}) = 0$, then from \eqref{eq1}, we obtain $$b_{1}z_{1}^{2} - b_{2}u_{2}^{2} = -2(m^{4}-1) \implies -b_{2}u_{2}^{2} \equiv -8m^{2} \pmod {q_{i}} \implies \left(\frac{b_{2}}{q_{i}}\right) = \left(\frac{2}{q_{i}}\right).$$ Now $v_{q_{i}}(z_{1})  = -t < 0$ implies $v_{q_{i}}(z_{j}) = -t < 0$ for all $j \in \{1,2,3\}$. But then $$b_{1}u_{1}^{2} - b_{2}u_{2}^{2} = -2(m^{4}-1) \cdot q_{i}^{2t} \implies b_{2} \equiv 0 \pmod {q_{i}},$$ a contradiction from Lemma \ref{lems2}.

\smallskip

Now, let us assume $b_{1} \not \equiv 0 \pmod {q_{i}}$. Then from \eqref{eq2}, one can note that $v_{q_{i}}(z_{1}) \leq 0$, and $v_{q_{i}}(z_{3}) \leq 0$. From Lemma \ref{lems1}, this implies either $v_{q_{i}}(z_{1}) = v_{q_{i}}(z_{3}) = 0$ or $v_{q_{i}}(z_{j}) = -t$ for some $t > 0$ and $j \in \{1,2,3\}$. Both these conditions then imply $\left(\frac{b_{2}}{q_{i}}\right) = 1$. Hence, the result follows. \\

\item Assuming $b_{2} \equiv 0 \pmod {r_{i}}$, if $v_{r_{i}}(z_{1}) > 0$, then from Lemma \ref{lems1} it follows that $v_{r_{i}}(z_{2}) \geq 0$. Therefore, the left-hand side of equation \eqref{eq1} is divisible by $r_{i}$, whereas the right-hand side is not a contradiction. Consequently, we have $v_{r_{i}}(z_{1}) \leq 0$. If $v_{r_{i}}(z_{1}) = -t < 0$, then from Lemma \ref{lems1} it follows that $v_{r_{i}}(z_{j}) = -t$ for all $j \in \{1,2,3\}$. Therefore, equation \eqref{eq1} yields $$b_{1}u_{1}^{2} - b_{2}u_{2}^{2} = -2(m^{4} - 1)r_{i}^{2t}.$$ But then $b_{2} \equiv 0 \pmod {r_{i}}$ implies that $b_{1} \equiv 0 \pmod {r_{i}}$. This is a contradiction to $(3)$ of Lemma \ref{lems2}. Hence, $v_{q_{i}}(z_{1}) = 0$, and then from \eqref{eq1}, it follows that $\left(\frac{b_{1}}{r_{i}}\right) = \left(\frac{-2(m^{4}-1)}{r_{i}}\right) = \left(\frac{8m^{2}}{r_{i}}\right) = \left(\frac{2}{r_{i}}\right)$.

\smallskip

Now let us assume $b_{2} \not \equiv 0 \pmod {r_{i}}$. Then similar to the previous case, equation \eqref{eq3} and Lemma \ref{lems1} yield that either $v_{r_{i}}(z_{2}) = v_{r_{i}}(z_{3}) = 0$, or $v_{r_{i}}(z_{j}) = -t > 0$ for all $j \in \{1,2,3\}$. Either way, that implies $\left(\frac{b_{1}}{r_{i}}\right) = 1$. Hence the result follows. \\

\item Assume that $([b_{1}], [b_{2}]) \in {\rm{Sel}}_{2}(E_{m})$. If $v_{2}(z_{i}) < 0$ for all $i \in \{1,2,3\}$, then from equation \eqref{eq1}, we obtain $$b_{1}u_{1}^{2} - b_{2}u_{2}^{2} = -2(m^{4}-1) \cdot 2^{2t} \implies b_{1} - b_{2} \equiv 0 \pmod 4. $$ Putting that in equation \eqref{eq2}, we get $b_{1}(u_{1}^{2} - b_{2}u_{2}^{2}) \equiv b_{1}(1 - b_{1}) \equiv 0 \pmod 4$, from where it follows that $b_{1} \equiv b_{2} \equiv 1 \pmod 4$. 

\smallskip

If $v_{2}(z_{1}) > 0$, then the left-hand side of equation \eqref{eq1} is divisible by $4$, but the right-hand side is not. This, in turn, implies that $v_{2}(z_{1}) = v_{2}(z_{2}) = 0$ is the only possibility. But $v_{2}(z_{1}) = v_{2}(z_{2}) = 0$ implies $b_{1} - b_{2} \equiv -2(m^{4}-1) \equiv 2 \pmod 4$ which implies either $b_{1} \equiv 1 \pmod 4, b_{2} \equiv 3  \pmod 4$ or $b_{1} \equiv 3 \pmod {4}$, $b_{2} \equiv 1 \pmod {4}$. We now prove that the later one is not a possibility. If $v_{2}(z_{3}) > 0$, the from equation \eqref{eq2}, it follows that $b_{1} \equiv 1 \pmod {4}$. Also, if $v_{2}(z_{3}) = 0$, then again equation \eqref{eq2} yields $1 \equiv b_{1}z_{1}^{2} - b_{1}b_{2}z_{3}^{2} \equiv 0 \pmod {4}$, a contradiction. Consequently, we must have $b{1} \equiv 1 \pmod {4}$.  
\end{enumerate}  
\end{proof}

\section{Proof of Theorem \ref{mainthm}}
First, we notice that the first part of Theorem \ref{mainthm} follows from the discussion of Section \ref{tor-sec} and Section \ref{lower-sec}. Now, we claim that $\langle([p_{i}],[p_{i}])\rangle \subseteq {\rm{Sel}}_{2}(E_{m})$ if $p_{i} \equiv 1 \pmod {4}$, and $\left(\frac{p_{i}}{q_{j}}\right) = \left(\frac{p_{i}}{r_{k}}\right) = 1$. Also, $\langle([-p_{i}],[p_{i}])\rangle \subseteq {\rm{Sel}}_{2}(E_{m})$ if $p_{i} \equiv 3 \pmod 4$, and $\left(\frac{p_{i}}{q_{j}}\right) = \left(\frac{-p_{i}}{r_{k}}\right) = 1$. In both cases, $q_{j}$ and $r_{k}$ vary over all prime factors of $(m^{4}-1-4m^{2})$ and $(m^{4}-1+4m^{2})$ respectively. We only prove the case $\langle([p_{i}],[p_{i}])\rangle \subseteq {\rm{Sel}}_{2}(E_{m})$ for $p_{i} \equiv 1 \pmod {4}$ due to its similarity with the proof of the case $p_{i} \equiv 3 \pmod 4$.

\begin{remark}\label{rems2}
The Jacobian of the intersection of equation \eqref{eq1} and equation \eqref{eq2} for $(b_{1}, b_{2}) = (p_{i}, p_{i})$ with $p_{i} \equiv 1 \pmod 4$ is
\begin{align}
    \begin{pmatrix}
2 \cdot p_{i} \cdot z_{1} & -2 \cdot p_{i} \cdot z_{2} & 0\\
2 \cdot p_{i} \cdot z_{1} & 0 & -2 \cdot {p_{i}}^{2} \cdot z_{3}
\end{pmatrix}
\end{align}
which one can easily observe has rank $2$ modulo $\ell$ whenever $\ell \neq 2, p_{i}$. Hence, except for those $\ell$'s, the geometric genus becomes the same as the arithmetic genus, which is $1$ by the degree-genus formula (cf. \cite{hart}, Section II, Ex. 8.4), and Hasse-Weil bound for a genus one curve can be used for all but those finitely many primes. For $\ell \neq p_{i}$, $\ell \geq 5$, Hasse bound guarantees a non-trivial solution $(z_{1},z_{2},z_{3}) \in \mathbb{F}_{\ell} \times \mathbb{F}_{\ell} \times \mathbb{F}_{\ell}$ of \eqref{eq1} and \eqref{eq2} modulo $\ell$. One can immediately note that all three of $z_{1}, z_{2}, z_{3}$ being zero modulo $\ell$ is not possible as $\ell \neq 2, p_{i}$. Now $z_{1} \equiv z_{2} \equiv 0 \pmod {\ell}$ implies $\ell^{2}$ divides $2(m^{4}-1) $, a contradiction. Similarly, $z_{1} \equiv z_{3} \equiv 0 \pmod \ell$ implies $-(m^{4}-1-4m^{2}) \equiv 0 \pmod \ell \implies l = q_{i}$, contradiction again. By suitably fixing two of $z_{1},z_{2}$ and $z_{3}$, one can now convert equations \eqref{eq1} and \eqref{eq2} into one single equation of one variable with a simple root over $\mathbb{F}_{\ell}$. That common solution can then be lifted to $\mathbb{Q}_{\ell}$ via Hensel's lemma.    
\end{remark}

Now we prove that if $([p_{i}],[p_{i}]) \in \mathbb{Q}(S,2) \times \mathbb{Q}(S,2)$ is such that $p_{i} \equiv 1 \pmod 4$ and $\left(\frac{p_{i}}{q_{j}}\right) = \left(\frac{p_{i}}{r_{k}}\right) = 1$, then $(p_{i}, p_{i}) \in {\rm{Sel}}_{2}(E_{m})$. As already mentioned in Remark \ref{rems2} above, we only prove the existence of the local solution of equation \eqref{eq1} and equation \eqref{eq2} for the primes $\ell = 2,3$, and  $p_{i}$.

\smallskip

For $\ell=2$, we note from Lemma \ref{lems3} that $p_{i} \equiv 1,5 \pmod 8$. For $p_{i} \equiv 1 \pmod 8$, we note that $(u_{1},1,1)$ is a solution to a single variable version of (\ref{eq1}) and (\ref{eq2}) with $v_{2}(z_{j}) < 0$, where $u_{1}^{2} \equiv 1 \pmod 8$. This solution can then be lifted to $\mathbb{Q}_{2}$ via Hensel's lemma. Similarly, if $p_{i} \equiv 5 \pmod 8$, then $(u_{1},1,1)$ is again a solution to a single-variable version of (\ref{eq1}) and (\ref{eq2}) with $v_{2}(z_{j}) = 0$ for all $j \in \{1,2,3\}$ with $u_{1}^{2} \equiv 1 \pmod 8$. This solution again can be lifted to $\mathbb{Q}_{2}$ via Hensel's lemma. 

\smallskip

For $\ell=3$, we first note that $m \equiv 0 \pmod 3$. This implies that neither of $b_{1}$ or $b_{2}$ are divisible by $3$. We now produce simple solutions $(z_{1}, z_{2}, z_{3})$ for equation \eqref{eq1} and equation \eqref{eq2} with two constant components below. In this way, we can treat equation \eqref{eq1} and equation \ref{eq2} as equations of one variable and note that those simple solutions can be lifted to $\mathbb{Q}_{3}$ via Hensel's lemma. For the case $p_{i} \equiv 1 \pmod 3$, choose $v_{3}(z_{j}) < 0$, and $(u_{1},1,1)$ is a solution that can be lifted, where $u_{1}^{2} \equiv 1 \pmod 3$. For the case $p_{i} \equiv 2 \pmod 3$, choose $v_{3}(z_{j}) \geq 0$, and $(u_{1},0,1)$ is a solution that can be lifted, where $u_{1}^{2} \equiv 1 \pmod 3$.
    
\smallskip

For $\ell = p_{i}$, taking cue from the previous cases, we produce solutions that can be lifted to $\mathbb{Q}_{p_{i}}$. We note that $b_{1}b_{2} \equiv 0 \pmod {p_{i}^{2}}$, and $\Big(\frac{-b_{1}b_{2}/p_{i}^{2}}{p_{i}}\Big) = 1$, agreeing with Lemma \ref{lems3}. Then under the assumption $v_{p_{i}}(z_{3}) = -1$ and $v_{p_{i}}(z_{j}) \geq 0$ for $j \in \{1,2\}$, $(0,0,u_{3})$ is a simple solution for equation \eqref{eq2} and equation \ref{eq3} that can be lifted to $\mathbb{Q}_{p_{i}}$, where $\left(\frac{-b_{1}b_{2}}{p_{i}^{2}}\right) \cdot u_{3}^{2} \equiv 4m^{2} \pmod {p_{i}}$. The existence of such $u_{3}$ is guaranteed as $\Big(\frac{-b_{1}b_{2}/p_{i}^{2}}{p_{i}}\Big) = 1$.

\smallskip

This concludes the proof as we have shown that solution for equation \eqref{eq1} and equation \eqref{eq2} exists in $\mathbb{Q}_{\ell}$ for every prime number $\ell$ when $(b_{1}, b_{2}) = (p_{i}, p_{i})$ with $p_{i} \equiv 1 \pmod 4$. Because one can trivially observe that a real solution exists for equation \eqref{eq1} and equation \eqref{eq2} too, we can conclude for all $\ell \leq \infty$, local solutions for equation \eqref{eq1} and equation \eqref{eq2} exist. Hence $([p_{i}], [p_{i}]) \in {\rm{Sel}}_{2}(E_{m})$. This completes the proof of Theorem \ref{mainthm}. $\hfill\Box$

\section{Proof of Corollary \ref{maincor}}

It is given that $m^{4} - 1 \pm 4m^{2}$ are prime numbers. Let $m^{4} - 1 - 4m^{2} = q$ and $m^{4} - 1 + 4m^{2} = r$. Noting that $q \equiv -r \equiv 4m^{2} \pmod {p_{i}}$, one can see that $\left(\frac{p_{i}}{q}\right) = \left(\frac{p_{i}}{r}\right) = 1$ for $p_{i} \equiv 1 \pmod 4$ whereas $\left(\frac{p_{j}}{q}\right) = \left(\frac{-p_{j}}{r}\right) = 1$ for $p_{j} \equiv 3 \pmod 4$. Hence, from Lemma \ref{lems3} and Theorem \ref{mainthm}, we can conclude that $\{([p_{i}], [p_{i}]), ([-p_{0}], [p_{0}])\} \in {\rm{Sel}}_{2}(E_{m})$ where $p_{i} \equiv 1 \pmod 4$, and $p_{0} \equiv 3 \pmod 4$. 

\smallskip

We now note that $q \equiv r  \equiv 7 \pmod 8$, and consequently, $\left(\frac{2}{q}\right) = \left(\frac{2}{r}\right) = 1$. Hence, from Lemma \ref{lems3}, $([-q], [1])$ and $([1], [r])$ satisfy necessary properties for potential elements in ${\rm{Sel}}_{2}(E_{m})$. We now prove that $(-q,1) \in {\rm{Sel}}_{2}(E_{m})$. This will be sufficient to show that $(1,r) \in {\rm{Sel}}_{2}(E_{m})$. This is because $(-q,r) \in {\rm{Sel}}_{2}(E_{m})$ always, due to being the image of the torsion point $(4m^{2}, 0)$, and hence $(1,r)  = (-q,1) \cdot (-q,r) \in [(-q,1)]$.

\smallskip

An approach similar to Remark \ref{rems2} shows that it is enough to prove that for $(-q,1)$, equation \eqref{eq1} and equation \eqref{eq2} have solutions over $\mathbb{Q}_{l}$ for $l = 2,3$ and $q$. We first note that $q \equiv -1 \pmod 8$, and without loss of generality one can assume $m \equiv 0 \pmod 3$ since both $m + 1$ and $m - 1$ are prime numbers, $q = m^{4}-1-4m^{2} \equiv -1 \pmod 3$ also. This, in turn, shows $(u_{1},1,1)$ is a solution of equation \eqref{eq1} and equation \eqref{eq2} with $v_{2}(z_{i}) = -t < 0$, that can be lifted to $\mathbb{Q}_{2}$ via Hensel's lemma, where $u_{1} \not \equiv 0 \pmod 2$. In a similar way, one can show that $(u_{1},1,1)$ is a solution of equation \eqref{eq1} and equation \eqref{eq2} with $v_{2}(z_{i}) = -t < 0$, that can be lifted to $\mathbb{Q}_{3}$, where $u_{1} \not \equiv 0 \pmod 3$ too. For $l = q$, we note that $(0,u_{2},0)$ with $u_{2}^{2} \equiv 8m^{2} \pmod q$ is a solution to equation \eqref{eq1} and equation \eqref{eq3}, that can be lifted to $\mathbb{Q}_{q}$. Hence, we can conclude $([-q], [1]) \in {\rm{Sel}}_{2}(E_{m})$.

\smallskip

We have proved that $\langle{([p_{i}], [p_{i}]), ([-p_{0}], [p_{0}]), ([-q], [1])\rangle} \subseteq {\rm{Sel}}_{2}(E_{m})$. In fact, Lemma \ref{lems3} asserts that the inclusion is not strict, i.e., $\langle{([p_{i}], [p_{i}]), ([-p_{0}], [p_{0}]), ([-q], [1])\rangle} = {\rm{Sel}}_{2}(E_{m}).$ Noting that $p_{i}$ varies over all $4k+1$ prime factors of $m^{4}-1$, and $p_{0}$ is the only $4k+3$ factor of $m^{4}-1$, now the result follows. $\hfill\Box$

\begin{ack}
We sincerely thank the anonymous referees for their careful reading of the manuscript and pointing out an issue in the computation of the Mordell-Weil rank of $E_{m}$ in an earlier version of the paper. Their insightful remarks helped us improving the proof of the main theorem. We are grateful to our respective institutions for providing excellent facilities to carry out this research. The research of the first author is supported by UGC (Grant no. 211610060298) and he gratefully acknowledges that. The computations in Table 1 are carried out using SageMath software.
\end{ack}

\end{document}